\numberwithin{equation}{section}
\newcommand{\I}{\mathbb I}
\newcommand{\e}{\varepsilon}
\newtheorem{thm}{Theorem}[section]
\newtheorem{pro}[thm]{Proposition}
\newtheorem{lem}[thm]{Lemma}
\newtheorem{cor}[thm]{Corollary}
\begin{document}

\title[Krasinkiewicz spaces and parametric Krasinkiewicz maps]
{Krasinkiewicz spaces and parametric Krasinkiewicz maps}
\author{Eiichi Matsuhashi}
\address{Department of Mathematics, Faculty of Engineering,
Yokohama National University Yokohama, 240-8501, Japan}
\email{mateii@ynu.ac.jp}
\author{Vesko Valov}
\address{Department of Computer Science and Mathematics,
Nipissing University, 100 College Drive, P.O. Box 5002, North Bay,
ON, P1B 8L7, Canada}
\email{veskov@nipissingu.ca}
\thanks{The second author was partially supported by NSERC
Grant 261914-03.} \keywords{Krasinkiewicz spaces, Krasinkiewicz
maps, continua, selections for set-valued maps, $C$-spaces}
\subjclass{Primary 54F15; Secondary 54F45, 54E40}

\begin{abstract}
We say that a metrizable space $M$ is a Krasinkiewicz space if any
map from a metrizable compactum $X$ into $M$ can be approximated by
Krasinkiewicz maps (a map $g\colon X\to M$ is Krasinkiewicz provided
every continuum in $X$ is either contained in a fiber of $g$ or
contains a component of a fiber of $g$). In this paper we establish
the following property of Krasinkiewicz spaces: Let $f\colon X\to Y$
be a perfect map between metrizable spaces and $M$ a Krasinkiewicz
complete $ANR$-space. If $Y$ is a countable union of closed
finite-dimensional subsets, then the function space $C(X,M)$ with
the source limitation topology contains a dense $G_{\delta}$-subset
of maps $g$ such that all restrictions $g|f^{-1}(y)$, $y\in Y$, are
Krasinkiewicz maps. The same conclusion remains true if $M$ is
homeomorphic to a closed convex subset of a Banach space and $X$ is
a $C$-space.
\end{abstract}
\maketitle \markboth{}{Krasinkiewicz spaces and maps}


\section{Introduction}
All spaces in the paper are assumed to be metrizable and all maps
continuous. Unless stated otherwise, any function space $C(X,M)$ is
endowed with the {\em source limitation topology}. This topology,
known also as the {\em fine topology}, was introduced by Whitney
\cite{w} and has a base at a given $f\in C(X,M)$ consisting of the
sets
$$B_\varrho(f,\e)=\{g\in C(X,M):\varrho(g,f)<\e\},$$
where $\varrho$ is a fixed compatible metric on $M$ and
$\e:X\to(0,1]$ runs over continuous functions into $(0,1]$. The
symbol $\varrho(f,g)<\e$ means that
$\varrho\big(f(x),g(x)\big)<\e(x)$ for all $x\in X$. The source
limitation topology doesn't depend on the metric $\varrho$ \cite{nk}
and has the Baire property provided $M$ is completely metrizable
\cite{munkers}. Obviously, this topology coincides with the uniform
convergence topology when $X$ is compact.

We say that a space $M$ is a {\em Krasinkiewicz space} if for any
compactum $X$ the function space $C(X,M)$ contains a dense subset of
Krasinkiewicz maps. Recall that a map $g\colon X\to M$, where $X$ is
compact, is said to be Krasinkiewicz \cite{ll} if every continuum in
$X$ is either contained in a fiber of $g$ or contains a component of
a fiber of $g$. Krasinkiewicz \cite{kr1} proved that every
$1$-manifold is a Krasinkiewicz space (for the interval $\I$ this
was established by Levin-Lewis \cite{ll}). The first author,
generalizing the  Krasinkiewicz result, proved in \cite{m} that all
compact polyhedra, as well as all $1$-dimensional Peano continua and
manifolds modeled on a Menger cube are Krasinkiewicz spaces.

The main results in this paper is the following theorem:

\begin{thm} Let $M$ be a Krasinkiewicz complete $ANR$-space and $f\colon X\to
Y$ a perfect map with $Y$ being a strongly countable-dimensional
space. Then the function space $C(X,M)$ contains a dense
$G_\delta$-set of maps $g$ such that all restrictions $g|f^{-1}(y)$,
$y\in Y$, are Krasinkiewicz maps. Moreover, if in addition $M$ is a
closed convex subset of a Banach space, then the same conclusion
remains true provided $Y$ is a $C$-space.
\end{thm}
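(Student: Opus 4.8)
The plan is to exhibit the desired collection as a dense $G_\delta$-subset of the Baire space $C(X,M)$ (a Baire space because $M$ is completely metrizable, as noted in the introduction) by writing it as a countable intersection of \emph{open dense} sets, and to handle density through continuous selections over the parameter space $Y$. I would first fix a complete compatible metric $\varrho$ on $M$ and set up the $G_\delta$ structure. The failure of $g|f^{-1}(y)$ to be Krasinkiewicz is witnessed by a continuum $K\subseteq f^{-1}(y)$ that is neither contained in a fiber of $g$ nor contains a component of a fiber of $g$; I would encode the Krasinkiewicz dichotomy by countably many approximate conditions indexed by pairs of finite open covers $(\U,\V)$ of $M$ together with an integer $n\in\IN$, letting $G_{\U,\V,n}$ be the set of maps $g$ for which every continuum $K$ lying in a single fiber $f^{-1}(y)$ with $\diam g(K)$ not $\U$-small is forced, at scale $n$, to contain a component of some $g$-fiber up to $\V$-accuracy. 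Because $f$ is perfect its fibers are compact, so these conditions are detected on the compact hyperspaces $2^{f^{-1}(y)}$ and vary semicontinuously in $g$; this makes each $G_{\U,\V,n}$ open, and their intersection is exactly the set of $g$ with all restrictions $g|f^{-1}(y)$ Krasinkiewicz. It then remains to prove that each $G_{\U,\V,n}$ is dense.

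For density I would fix $g_0\in C(X,M)$ and a continuous $\e\colon X\to(0,1]$ and produce $g\in G_{\U,\V,n}\cap B_\varrho(g_0,\e)$. The engine is the hypothesis that $M$ is a Krasinkiewicz space: for each fixed $y$ the compact fiber $f^{-1}(y)$ admits a Krasinkiewicz approximation of $g_0|f^{-1}(y)$ into $M$. The task is to glue these fiberwise approximations into one map on $X$, which I would phrase as a selection problem. Define a set-valued map $\Phi$ on $Y$ whose value $\Phi(y)$ consists of maps $f^{-1}(y)\to M$ that are $\e$-close to $g_0|f^{-1}(y)$ and satisfy the $(\U,\V,n)$-condition; using perfectness of $f$ together with the fact that $M$ is an $ANR$ (so approximations transport between nearby fibers), $\Phi$ is lower semicontinuous with nonempty values. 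A continuous selection $s$ of $\Phi$ then assembles, via $g(x)=s(f(x))(x)$, into the required map.

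The split between the two cases occurs precisely in the choice of selection theorem, and in both cases the selection is taken not from Krasinkiewicz maps but from the highly connected pool of $\e$-approximating maps furnished by the target's geometry. When $Y$ is strongly countable-dimensional I would write $Y=\bigcup_k Y_k$ with each $Y_k$ closed and finite-dimensional and select inductively over the $Y_k$ by the finite-dimensional (Michael) selection theorem, which requires the relevant families of approximating maps to be equi-locally-$n$-connected and complete through dimension $\dim Y_k$ --- a condition guaranteed by the local convex-combination structure of the $ANR$ $M$. When instead $M$ is a closed convex subset of a Banach space and $Y$ is a $C$-space, the approximating families are genuinely convex, and Uspenskij's selection theorem for $C$-spaces delivers the selection in one step with no dimension restriction.

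The main obstacle is that Krasinkiewicz maps form neither a convex nor even a connected set, so $\Phi$ cannot literally carry Krasinkiewicz-valued fibers and remain selectable by these theorems. The resolution is to select from the convex (respectively highly connected) family of approximating maps provided by the Banach-convex or $ANR$ structure of $M$, arrange that the selected map meets only the \emph{finite-scale} condition $(\U,\V,n)$, and then let the Baire-category intersection over all triples $(\U,\V,n)$, together with completeness of $M$, promote these approximate conditions to the exact fiberwise Krasinkiewicz property. Verifying the lower semicontinuity of $\Phi$ and the required equi-connectivity of its values \emph{uniformly as the fiber varies} is where the perfectness of $f$ and the $ANR$/convexity hypotheses on $M$ must be exploited most carefully, and this is the step I expect to demand the most work.
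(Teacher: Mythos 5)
Your architecture coincides with the paper's: write the exact fiberwise Krasinkiewicz property as a countable intersection of finite-scale conditions (the paper uses $\mathcal{K}(m,n,y)$, demanding that every continuum $L\subset f^{-1}(y)$ with $\diam g(L)\ge 1/n$ contain a point whose component lies in $B(L,1/m)$), prove each such set open via a limit-of-continua compactness argument, and prove density by a continuous selection over $Y$ from the set-valued map $y\mapsto \mathcal{K}(m,n,y)\cap B_\varrho(g_0,\e)$, splitting into Gutev's finite-dimensional selection theorem and Uspenskij's $C$-space selection theorem. So far, so good.

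The genuine gap is in the one step you yourself flag as the obstacle, and your proposed resolution does not work as stated. You say the equi-connectivity of the values is ``guaranteed by the local convex-combination structure of the $ANR$ $M$,'' and that one should ``select from the convex (respectively highly connected) family of approximating maps'' and then ``arrange that the selected map meets only the finite-scale condition.'' But a selection from the full ball of $\e$-approximating maps has no reason to satisfy the finite-scale condition on any fiber; you must select from the intersection $\mathcal{K}(m,n,y)\cap B_\varrho(g_0,\e)$, and the intersection of a convex (or aspherical) set with an open set is in general neither connected nor aspherical. Establishing the needed connectivity is exactly where the Krasinkiewicz hypothesis on $M$ enters, not merely its $ANR$/convex structure. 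In the convex case the paper shows that $\overline{B}_\varrho(g_0,\e,y)\setminus\pi_y(\mathcal{K}(m,n,y))$ is a $Z$-set in the convex set $\overline{B}_\varrho(g_0,\e,y)$, by pushing any compact family of maps inward via $(1-\lambda)w+\lambda g_0$ and then perturbing to a Krasinkiewicz map of $K\times f^{-1}(y)$; asphericity of the complement of a $Z$-set in a convex set then feeds Uspenskij's theorem. In the $ANR$ case the values are not aspherical on the nose: a sphere in $\Phi_\e(y)$ only bounds in $\Phi_{64\e}(y)$, via an extension to $\mathbb{B}^{k+1}\times f^{-1}(y)$, a Krasinkiewicz perturbation of that extension, and a homotopy splicing that exploits the openness of $\pi_y(\mathcal{K}(m,n,y))$. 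This loss of scale is why the paper uses a filtered family $\Phi_j(y)=\Phi_{\e/8^{2(k-j)+1}}(y)$ and Gutev's theorem rather than Michael's finite-dimensional selection theorem, whose equi-$\mathrm{LC}^n$ and $C^n$ hypotheses you would not be able to verify for a single fixed $\e$. Finally, ordinary lower semicontinuity of $\Phi$ is not what is needed or proved; the relevant property is absorption of compact subsets of $\Phi_\e(y_0)$ into $\Phi_\e(y)$ for $y$ near $y_0$, and the selection obtained on $f^{-1}(H)$ must still be extended to all of $X$ within the $\e$-tube (via the controlled extension metric on the $ANR$, or Michael's convex-valued theorem), a step your sketch omits.
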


 Recall that $X$ is a $C$-space if for any sequence
$\{\nu_n\}_{n=1}^{\infty}$ of open covers of $X$ there exists a
sequence $\{\gamma_n\}_{n=1}^{\infty}$ of disjoint open families in
$X$ such that each $\gamma_n$ refines $\nu_n$ and
$\cup_{n=1}^{\infty}\gamma_n$ is a cover of $X$. Every strongly
countable-dimensional space (i.e. a space which is a union of
countably many closed finite-dimensional subsets), as well as every
countable-dimensional space (a countable union of 0-dimensional
subsets) is a $C$-space \cite{re:95} and there exists a compact
$C$-space which is not countable-dimensional.

Everywhere below by a {\em polyhedron} we mean the underlying space
of a simplicial complex equipped with the metric topology. A
compactum is called a {\em Bing space} if each of its subcontinua is
hereditarily indecomposable. According to Corollary 3.2, each
polyhedron is a Krasinkiewicz space. Moreover, it follows from
\cite{st} that for any polyhedron $P$ without isolated points and a
compactum $X$ the space $C(X,P)$ contains a dense set of {\em Bing
maps} (maps $g$ such that all fibers $g^{-1}(y)$, $y\in P$, are Bing
spaces). Therefore, Theorem 1.1 and \cite[Theorem 1.1]{v} imply the
following corollary:

\begin{cor}
Let $P$ be a complete polyhedron without isolated points and
$f\colon X\to Y$ a perfect map. Then the function space $C(X,P)$
contains a dense $G_\delta$-set of maps $g$ such that all
restrictions $g|f^{-1}(y)$, $y\in Y$, are both Bing and
Krasinkiewicz maps in each of the following cases: $(i)$ $Y$ is
strongly countable-dimensional; $(ii)$ $Y$ is a $C$-space and $P$ is
a closed convex subset of a Banach space.
\end{cor}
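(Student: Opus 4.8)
The plan is to prove Corollary 1.2 by combining three density/genericity statements and intersecting the resulting dense $G_\delta$-sets. The target set is a set of maps $g$ whose restrictions to all fibers $f^{-1}(y)$ are simultaneously Bing maps and Krasinkiewicz maps. Since $C(X,P)$ with the source limitation topology has the Baire property (because $P$ is complete), a finite — indeed countable — intersection of dense $G_\delta$-subsets is again dense $G_\delta$, so it suffices to exhibit the ``Bing'' part and the ``Krasinkiewicz'' part each as a dense $G_\delta$-set and then take their intersection.

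First I would handle the Krasinkiewicz part. By Corollary 3.2 every polyhedron is a Krasinkiewicz space, and a polyhedron without isolated points that is complete is in particular a complete $ANR$-space (polyhedra are $ANR$s). Hence $P$ satisfies the hypotheses of Theorem 1.1. Applying Theorem 1.1 to the perfect map $f\colon X\to Y$ gives, in case $(i)$ where $Y$ is strongly countable-dimensional, a dense $G_\delta$-set $\mathcal{K}\subseteq C(X,P)$ of maps $g$ all of whose fiber-restrictions $g|f^{-1}(y)$ are Krasinkiewicz maps. In case $(ii)$, where $Y$ is a $C$-space, I would invoke the second (``Moreover'') clause of Theorem 1.1, which requires $P$ to be a closed convex subset of a Banach space; this is exactly the extra hypothesis placed on $P$ in case $(ii)$. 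So in both cases we obtain such a dense $G_\delta$-set $\mathcal{K}$.

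Next I would handle the Bing part, which is the content of the cited result. The excerpt records that \cite[Theorem 1.1]{v} yields, for a perfect map $f\colon X\to Y$, a \emph{parametric} version of the Bing-map phenomenon: under the appropriate dimension-theoretic condition on $Y$ (strongly countable-dimensional in case $(i)$, a $C$-space in case $(ii)$), the space $C(X,P)$ contains a dense $G_\delta$-set $\mathcal{B}$ of maps $g$ whose restrictions $g|f^{-1}(y)$ are Bing maps for every $y\in Y$. The hypothesis that $P$ has no isolated points is precisely what the result from \cite{st}, and its parametric strengthening in \cite{v}, needs in order to guarantee a dense supply of Bing maps. Thus $\mathcal{B}$ is a dense $G_\delta$-subset of $C(X,P)$ in each case.

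Finally I would intersect: $\mathcal{K}\cap\mathcal{B}$ is a countable intersection of dense $G_\delta$-sets in the Baire space $C(X,P)$, hence itself dense $G_\delta$, and every $g$ in it has all fiber-restrictions $g|f^{-1}(y)$ simultaneously Bing and Krasinkiewicz, which is the assertion of the corollary. The only genuine subtlety — and the step I expect to require the most care — is matching the dimension-theoretic hypotheses on $Y$ and the structural hypotheses on $P$ across the two invoked theorems so that both $\mathcal{K}$ and $\mathcal{B}$ are available under the \emph{same} case assumptions; in particular one must check that the ``closed convex subset of a Banach space'' condition in case $(ii)$ simultaneously feeds the second clause of Theorem 1.1 and the $C$-space version of \cite[Theorem 1.1]{v}. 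Once this bookkeeping is confirmed, the Baire-category intersection argument is routine.
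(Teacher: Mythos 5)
Your proposal is correct and matches the paper's (implicit) argument exactly: the authors likewise obtain the Krasinkiewicz part from Theorem 1.1 (using Corollary 3.2 to see that the polyhedron $P$ is a Krasinkiewicz complete $ANR$), the Bing part from the parametric Bing-map theorem of \cite{v} (whose density input is the result from \cite{st} for polyhedra without isolated points), and intersect the two dense $G_\delta$-sets in the Baire space $C(X,P)$. The bookkeeping of hypotheses across the two cases that you flag is indeed the only content of the deduction, and you have handled it correctly.
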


Most part of the paper is devoted to the proof of Theorem 1.1, given
in Section 2. In Section 3 we provide some properties of
Krasinkiewicz spaces. For example, we show that a complete $ANR$ is
a Krasinkiewicz space if and only if it has an open cover of
Krasinkiewicz subspaces. In particular, all $n$-manifolds, $n\geq
1$, are Krasinkiewicz spaces.

\section{Proof of Theorem 1.1}
We fixed a metric $d$ on $X$ and for every $A\subset X$ and
$\delta>0$ let $B(A,\delta)=\{x\in X: d(x,A)<\delta\}$. If $y\in Y$
and $m,n\geq 1$, then $\mathcal{K}(m,n,y)$ denotes the set of all
maps $g\in C(X,M)$ satisfying the following condition:

\begin{itemize}
\item For each subcontinuum $L\subset f^{-1}(y)$ with diam$g(L)\ge 1/n$
there exists $x\in L$ such that $C(x,g|f^{-1}(y))\subset B(L,1/m)$.
Here, $g|f^{-1}(y)$ is the restriction of $g$ over $f^{-1}(y)$ and
$C(x,g|f^{-1}(y))$ denotes the component of the fiber
$g^{-1}(g(x))\cap f^{-1}(y)$ of $g|f^{-1}(y)$ containing $x$.
\end{itemize}

For $H\subset Y$ let $\mathcal{K}(m,n,H)$ be the intersection of all
$\mathcal{K}(m,n,y)$, $y\in H$.  We also denote by $\mathcal{K}(H)$
the set of all maps $g\in C(X,M)$ such that $g|f^{-1}(y)\colon
f^{-1}(y)\to M$ is a Krasinkiewicz map for each $y\in H$.

\begin{pro}
$\mathcal{K}(H)=\bigcap_{m,n \in \mathbb{N}} K(m,n,H)$.
\end{pro}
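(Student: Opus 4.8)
The plan is to reduce the statement to the pointwise equivalence, for each fixed $y\in H$,
$$g\in\bigcap_{m,n\in\IN}\mathcal{K}(m,n,y)\iff g|f^{-1}(y)\text{ is a Krasinkiewicz map},$$
and then intersect over $y$. Indeed $\mathcal{K}(m,n,H)=\bigcap_{y\in H}\mathcal{K}(m,n,y)$ by definition, and $\mathcal{K}(H)=\bigcap_{y\in H}\{g:g|f^{-1}(y)\text{ is Krasinkiewicz}\}$, so once the pointwise equivalence is known the orders of intersection may be exchanged and the proposition follows at once. Throughout I fix $y$, write $F=f^{-1}(y)$ (a compactum, since $f$ is perfect) and $h=g|F$, so that $C(x,h)$ denotes the component of $x$ in the fiber $h^{-1}(h(x))$; I recall that in the compactum $h^{-1}(p)$ this component coincides with the quasicomponent of $x$, i.e.\ with the intersection of all relatively clopen sets containing $x$.

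The inclusion $\mathcal{K}(H)\subseteq\bigcap_{m,n}\mathcal{K}(m,n,H)$ is the easy one. Assume $h$ is Krasinkiewicz and let $L\subseteq F$ be a subcontinuum with $\diam g(L)\ge 1/n$. Then $h(L)$ is nondegenerate, so $L$ is not contained in a fiber of $h$; hence $L$ contains a component $Q$ of some fiber $h^{-1}(p)$. Choosing $x\in Q$ gives $h(x)=p$ and $Q=C(x,h)\subseteq L\subseteq B(L,1/m)$, so $g\in\mathcal{K}(m,n,y)$ for every $m,n$. As this holds for each $y\in H$, the inclusion follows.

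The reverse inclusion is the substance, and I would argue it by contradiction: assume $g\in\bigcap_{m,n}\mathcal{K}(m,n,y)$ but that some subcontinuum $L\subseteq F$ is not contained in a fiber yet contains no component of a fiber. Fix $n$ with $\diam h(L)\ge 1/n$. The first step is to establish that the map $x\mapsto C(x,h)$ is upper semicontinuous on $F$; this follows in the standard way from the quasicomponent description together with upper semicontinuity of the fibers $p\mapsto h^{-1}(p)$, by separating $C(x_0,h)$ from the rest of its fiber by a relatively clopen set and using that nearby fibers are then split by the same pair of disjoint open sets. Upper semicontinuity implies that the stick-out function $e(x)=\max_{z\in C(x,h)}\dist(z,L)$ is upper semicontinuous, and our standing assumption (no component of a fiber lies in $L$) forces $C(x,h)\not\subseteq L$, hence $e(x)>0$, for every $x\in L$. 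If $\inf_{x\in L}e(x)>0$, then choosing $m$ with $1/m$ below this infimum makes $L$ itself violate the defining condition of $\mathcal{K}(m,n,y)$, a contradiction.

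The remaining case $\inf_{x\in L}e(x)=0$ is where the difficulty concentrates. Here I would take $x_m\in L$ with $e(x_m)\to 0$; since the hyperspace of subcontinua of $F$ is compact in the Hausdorff metric, after passing to a subsequence I may assume $x_m\to x\in L$ and $C(x_m,h)\to C_0$. A routine limit argument then shows $x\in C_0$, that $h$ is constant on $C_0$ (as $h(x_m)\to h(x)$ and $h$ is constant on each $C(x_m,h)$), and that $C_0\subseteq L$ (as $\dist(z,L)\le e(x_m)\to 0$ for $z\in C(x_m,h)$ and $L$ is closed); thus $C_0\subseteq C(x,h)$. The main obstacle is now visible: I have trapped a subcontinuum $C_0$ of a fiber inside $L$, but a Krasinkiewicz map demands a full \emph{component}, and $C_0$ may be a proper subcontinuum of $C(x,h)$. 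This is not a defect of the estimates but reflects the genuine failure of \emph{lower} semicontinuity of $x\mapsto C(x,h)$, so one cannot expect $C_0=C(x,h)$. The resolution I propose exploits that the hypothesis is quantified over \emph{all} admissible subcontinua, not only over $L$: from the shrinking components and the fact that $C(x,h)\not\subseteq L$ one should extract a subcontinuum $L'$ with nondegenerate image, contained in $L$ (so still containing no component, and with $\dist(\cdot,L')\ge\dist(\cdot,L)$, hence uniformly positive stick-out), which then violates the condition and yields the contradiction. Producing such an $L'$ in full generality, rather than in the model situation where the exceptional set is a single point one can simply cut around, is the delicate point, and it is there that I expect the real work of the proof to lie.
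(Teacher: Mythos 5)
Your easy inclusion and your diagnosis of the obstruction (upper but not lower semicontinuity of $x\mapsto C(x,h)$) are both correct, but the proof has a genuine gap, and you say so yourself: in the case $\inf_{x\in L}e(x)=0$ you never produce the continuum $L'$ that would violate some $\mathcal{K}(m,n,y)$, so the contradiction is never reached. The limit continuum $C_0$ you extract is a subcontinuum of a fiber lying in $L$, but the Krasinkiewicz condition requires a full component, and nothing in your argument upgrades $C_0$ to one; the sentence ``one should extract a subcontinuum $L'$\dots'' is exactly the missing step, not a sketch of it.

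The paper avoids the contradiction scheme altogether and uses the hypothesis positively and iteratively. Fix $y$ and $L=L_1$ with $\diam\,g(L_1)>0$. Pick $n_1$ with $\diam\,g(L_1)\ge 1/n_1$; membership in $\mathcal{K}(2,n_1,y)$ gives a nonempty set $E\subset L_1$ of points whose components lie in $B(L_1,1/2)$, and the upper semicontinuity you noted makes $E$ relatively open in $L_1$. The key idea you are missing is a maximality argument: among the subcontinua of $L_1$ that pass through a fixed point of $E$ and are entirely contained in $E$, if all had degenerate image under $g$, then the closure of their union would be a maximal such continuum; by openness of $E$ and boundary bumping it could be properly enlarged unless it were all of $L_1$, which is impossible since $\diam\,g(L_1)>0$. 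Hence there is a subcontinuum $L_2\subset L_1$ with $\diam\,g(L_2)>0$ such that \emph{every} point of $L_2$ has its component inside $B(L_1,1/2)$. Iterating with $1/(k+1)$ in place of $1/2$ produces a decreasing sequence $\{L_k\}$ of continua with nondegenerate images, and any $x\in\bigcap_k L_k$ satisfies $C(x,g|f^{-1}(y))\subset B(L,1/(k+1))$ for every $k$, hence $C(x,g|f^{-1}(y))\subset L$. This nested construction is precisely the work your proposal defers.
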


\begin{proof} Obviously $\mathcal{K}(H) \subset \bigcap_{m,n \in \mathbb{N}}
K(m,n,H)$. So, we need to prove the inclusion
 $\bigcap_{m,n \in \mathbb{N}} K(m,n,H)  \subset  \mathcal{K}(H)$.
Let $g \in \bigcap_{m,n \in \mathbb{N}} K(m,n,H)$, $y \in H$ and $L
\subset f^{-1}(y)$ be a subcontinuum such that diam$g(L) >0$. We are
going to prove that there exists a subcontinuum $L_2 \subset L_1=L$
such that diam$g(L_2)>0$ and $C(x,g|f^{-1}(y)) \subset B(L_1,1/2)$
for each $x \in L_2$. Since diam$g(L_1)>0$, there exists $n_1 \in
\mathbb{N}$ such that diam$g(L_1) \ge 1/n_1$. Since $g \in
K(2,n_1,y)$, there exists a point $x \in L_1$ such that
$C(x,g|f^{-1}(y)) \subset B(L_1, 1/2)$. Let $E \subset L_1$ be the
set of all such points. It is easy to see that:

\begin{itemize}
\item[($\sharp$)] every  $x \in E$ has a neighborhood $U_x$
in $L_1$ with $C(z,g|f^{-1}(y)) \subset B(L_1,1/2)$ for all $z \in
U_x$.
\end{itemize}

Let $x_0 \in E$ and $\mathcal{D}$ be the family of all subcontinua
$D$ of $L_1$ such that  $x_0 \in D$ and $C(d,g|f^{-1}(y)) \subset
B(L_1,1/2)$ for each $d \in D$. Since $\{x_0\} \in \mathcal{D}$,
$\mathcal{D} \neq \emptyset$.

\medskip
\textit{Claim. There exists $D^* \in \mathcal{D}$ such that diam
$g(D^*)>0$}.

\vspace{2mm}

Assume that $g(D)$ is a singleton for each $D \in \mathcal{D}$. Then
cl$(\bigcup \mathcal{D}) \in \mathcal{D}$. In fact, if $d, d' \in
{\rm cl}(\bigcup \mathcal{D})$ then
$C(d,g|f^{-1}(y))=C{(d',g|f^{-1}(y))}$ (note that $g({\rm cl}(
\bigcup \mathcal{D}))$ is a singleton). Hence $C(d,g|f^{-1}(y))
\subset B(L_1,1/2)$ for each $d \in {\rm cl}( \bigcup \mathcal{D})$,
and this implies cl$( \bigcup \mathcal{D}) \in \mathcal{D}$. Then
cl$(\bigcup \mathcal{D})$ is a maximal element of $\mathcal{D}$.
 If cl$(\bigcup \mathcal{D}) \neq L_1$, then by ($\sharp$) there exists
a proper subcontinuum $D' \subset L_1$ such that $D'$ contains
cl$(\bigcup \mathcal{D})$ as a proper subcontinuum of $D'$ and
 $C(d,g|f^{-1}(y)) \subset B(L_1,1/2)$
for each $d \in D'$. But this contradicts  the fact that cl$(
\bigcup \mathcal{D})$
 is a maximal element of
$\mathcal{D}$. So cl$(\bigcup \mathcal{D})=L_1$. But this is a
contradiction because diam$g(L_1)>0$ and $g({\rm cl}(\bigcup
\mathcal{D}))$ is a singleton. So there exists $D^* \in \mathcal{D}$
such that diam$g(D^*) > 0$. This completes the proof of claim.

\vspace{2mm}

Let $L_2=D^*$. Then  $L_2$ has the required property. By induction,
we can find a  decreasing sequence $\{L_k\}_{k=1}^{\infty}$ of
subcontinua of $L$ such that for any $k \in \mathbb{N}$ we have

\begin{enumerate}
\item[($*$)] diam$g(L_k)>0$;
\item[($**$)] $C(x,g|f^{-1}(y)) \subset B(L_k,1/(k+1))$ for each $x \in
L_{k+1}$.
\end{enumerate}

It is easy to see that $C(x,g|f^{-1}(y)) \subset L$ for each $x \in
\bigcap_{k=1}^{\infty}L_k$. This implies $g \in \mathcal{K}(H)$,
which completes the proof.
\end{proof}

Obviously, if $Y=\bigcup_{m=1}^{\infty}Y_m$,
$\mathcal{K}(Y)=\bigcap_{i,m=1}^{\infty}\mathcal{K}(Y_m)$.
Therefore, according to Proposition 2.1, it suffices to show that
$\mathcal{K}(m,n,H)$ is open and dense in $C(X,M)$ with respect to
the source limitation topology for $m,n\geq 1$ and any closed
$H\subset Y$ in the following cases: (i) $H$ is finite-dimensional
and $M$ a Krasinkiewicz $ANR$-space; (ii) $H$ is a $C$-space and $M$
a Krasinkiewicz space homeomorphic to a closed convex subset of a
Banach space.

In both of the above two cases we follow the scheme from the proof
of \cite[Theorem 1.1]{v}. In particular, we need the following lemma
established in \cite[Lemma 2.1]{v}.

\begin{lem}\cite{v} Every complete $ANR$-space $M'$ admits
a complete metric $\varrho$ generating its topology satisfying the
following condition: If $Z$ is a paracompact space, $A\subset Z$ a
closed set and $\varphi\colon Z\to M'$ a map, then for every
function $\alpha:Z\to(0,1]$ and every map $g\colon A\to M'$ with
$\varrho\big(g(z),\varphi(z)\big)<\alpha(z)/8$ for all $z\in A$,
there exists a map $\bar{g}\colon Z\to M'$ extending $g$ such that
$\varrho\big(\bar{g}(z),\varphi(z)\big)<\alpha(z)$ for all $z\in Z$.
\end{lem}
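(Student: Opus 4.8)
The plan is to realise $M'$ as a neighbourhood retract of a Banach space and to manufacture $\bar g$ by interpolating linearly between $g$ and $\varphi$ inside that space and then retracting, having first chosen $\varrho$ so that the retraction contracts the relevant perturbations back to within the prescribed tolerance. First I would use a Kuratowski--Wojdys\l awski embedding to regard $M'$ as a closed subset of a Banach space $E$; being an $ANR$, $M'$ is then a neighbourhood retract, so there are an open set $W$ with $M'\subset W\subset E$ and a retraction $r\colon W\to M'$. Writing $\|\cdot\|$ for the norm of $E$, I would construct the complete metric $\varrho$ on $M'$ --- by a rescaling of $\|\cdot\|$ tuned to $\dist(\cdot,E\setminus W)$ and to the modulus of continuity of $r$ --- so as to secure two properties: $\varrho(p,q)\ge\|p-q\|$ for all $p,q\in M'$, and the following uniform contractivity of $r$: for every $y\in M'$ and every $t\in(0,1]$, if $u\in E$ and $\|u-y\|<\tfrac12 t$, then $u\in W$ and $\varrho(r(u),y)<t$. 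Obtaining a single such $\varrho$, valid uniformly over all base points $y$ and all scales $t$, is the delicate part.

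Granting $\varrho$, I would extend $g-\varphi|_A\colon A\to E$ to a map $h\colon Z\to E$ by the Dugundji extension theorem, so that $h|_A=g-\varphi|_A$ and each value $h(z)$ is a convex combination of values $(g-\varphi)(a_i)$ with the points $a_i\in A$ lying within twice $\dist(z,A)$ of $z$. Since $\varrho(g(a),\varphi(a))<\alpha(a)/8$ forces $\|g(a)-\varphi(a)\|<\alpha(a)/8$, the continuity of $\alpha$ together with this locality of the Dugundji extension provides a neighbourhood $V$ of $A$ on which $\|h(z)\|<\tfrac14\alpha(z)$. Fix an Urysohn function $\lambda\colon Z\to[0,1]$ with $\lambda\equiv1$ on $A$ and $\lambda\equiv0$ off $V$, and set $\psi=\varphi+\lambda\,h\colon Z\to E$. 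Then $\psi|_A=g$, $\psi=\varphi$ off $V$, and everywhere $\|\psi(z)-\varphi(z)\|=\lambda(z)\|h(z)\|<\tfrac12\alpha(z)$, so $\psi(z)\in W$ by the contractivity property. Defining $\bar g=r\circ\psi$, the fact that $r$ fixes $M'$ makes $\bar g$ an extension of $g$ that agrees with $\varphi$ off $V$, while the contractivity property (with $t=\alpha(z)$) gives $\varrho(\bar g(z),\varphi(z))<\alpha(z)$ for every $z$, as required.

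The main obstacle is exactly the construction of $\varrho$ in the first step. For a general complete $ANR$ the retraction $r$ can distort distances without bound as one approaches the frontier of $W$, so the plain norm metric yields no uniform relation between the permitted input error and the resulting output error; the whole force of the lemma --- that one fixed metric serves every paracompact $Z$, every closed $A\subset Z$, every $\varphi$, and every tolerance $\alpha$ --- depends on rescaling the metric, through $\dist(\cdot,E\setminus W)$ and the oscillation of $r$ on small balls, so that a perturbation of size $\tfrac12 t$ about any point of $M'$ stays inside $W$ and is pulled back by $r$ to within $t$. Once this uniform contractivity is arranged, the Dugundji interpolation and retraction above are routine.
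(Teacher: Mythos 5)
First, a point of reference: the paper itself gives no proof of this statement --- it is quoted from \cite{v} (Lemma 2.1 there) --- so the only thing to assess is whether your self-contained argument stands on its own.

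Your architecture (closed embedding of $M'$ into a Banach space $E$, interpolation between $\varphi$ and $g$ near $A$, retraction $r\colon W\to M'$ back into $M'$) is the right skeleton, but the step you yourself flag as ``the delicate part'' is a genuine gap, and in the form you state it it is not merely unproved but unachievable. Your uniform contractivity demands that for every $y\in M'$ and every $t\in(0,1]$, any $u\in E$ with $\|u-y\|<\tfrac12 t$ lies in $W$; taking $t=1$ this forces the norm-$\tfrac12$-neighbourhood of $M'$ to be contained in $W$, which is a constraint on $W$ and the ambient norm that no choice of the metric $\varrho$ on $M'$ can affect. It already fails for the complete ANR $M'=\{(x,0):x\ge 1\}\cup\{(x,1/x):x\ge 1\}\subset\mathbb{R}^2$: no open $W\supset M'$ admitting a retraction onto $M'$ can contain the vertical segment from $(n,0)$ to $(n,1/n)$ for large $n$ (a connected subset of $W$ must be retracted onto a connected subset of $M'$, but the segment meets both components of $M'$), yet that segment lies in the $\tfrac12$-neighbourhood of $M'$. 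The correct construction must let the admissible norm-perturbation shrink with the base point --- one needs a continuous $\epsilon\colon M'\to(0,1]$ with $B_{\|\cdot\|}(y,\epsilon(y))\subset W$ and $r$ mapping $B_{\|\cdot\|}(y,s)$ into a controlled $\varrho$-ball for $s\le\epsilon(y)$ --- and $\varrho$ must then be manufactured so that $\varrho$-smallness of order $t$ converts into norm-smallness of order $\epsilon(y)\,t$. Your auxiliary requirement $\varrho(p,q)\ge\|p-q\|$ supplies no such conversion (it bounds the norm by $\varrho$, whereas you need the perturbation allowed by the hypothesis $\varrho(g,\varphi)<\alpha/8$ to be small \emph{relative to the local scale} $\epsilon(\varphi(z))$). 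Since the existence of one metric doing this simultaneously at all points and scales is the entire content of the lemma, deferring it leaves the proof without its essential ingredient. Two smaller repairs would also be needed: the Dugundji extension with the locality property you invoke is a theorem about metrizable domains, while $Z$ is only assumed paracompact, so the interpolant $h$ should instead be produced by Michael's convex-valued selection theorem (as the paper does in an analogous situation in Proposition 2.9); and $\alpha$ must be read as continuous for your neighbourhood-of-$A$ estimate, which is at least consistent with how Lemma 2.2 is applied in Section 2.
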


\subsection{Proof that $\mathcal{K}(m,n,H)$ is open in $C(X,M)$ for any
$m,n\geq1$ and any closed $H\subset Y$} In this subsection we prove
that all sets $\mathcal{K}(m,n,H)$ are open in $C(X,M)$, where
$(M,\varrho)$ is a complete metric (not necessarily an $ANR$ or a
Krasinkiewicz) space.

\begin{lem} Let $g\in
\mathcal{K}(m,n,y)$ for some $y\in Y$ and $m,n\geq 1$. Then there
exists a neighborhood $V_y$ of $y$ in $Y$ and $\delta_y>0$ such that
$y'\in V_y$ and $\varrho\big(g_1(x),g(x)\big)<\delta_y$ for all
$x\in f^{-1}(y')$ yields $g_1\in\mathcal{K}(m,n,y')$.
\end{lem}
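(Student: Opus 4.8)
The plan is to argue by contradiction, and the guiding idea is to replace the unstable condition ``the component $C(x_0,g|f^{-1}(y))$ lies in $B(L,1/m)$'' by a \emph{separation of the whole fibre into clopen pieces}, and then transport that separation to nearby fibres by a compactness argument based on the perfectness of $f$. First I would negate the conclusion: fix a decreasing neighbourhood base $\{V_k\}$ of $y$ and assume that for each $k$ there are $y_k\in V_k$ and $g_k\in C(X,M)$ with $\varrho\big(g_k(x),g(x)\big)<1/k$ for all $x\in f^{-1}(y_k)$, yet $g_k\notin\mathcal{K}(m,n,y_k)$. Then for each $k$ there is a subcontinuum $L_k\subset f^{-1}(y_k)$ with $\diam g_k(L_k)\ge 1/n$ and $C(x,g_k|f^{-1}(y_k))\not\subset B(L_k,1/m)$ for every $x\in L_k$. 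Since $f$ is perfect, $P=f^{-1}\big(\{y\}\cup\{y_k:k\ge1\}\big)$ is compact, so the $L_k$ lie in the compact hyperspace of subcontinua of $P$; passing to a subsequence I may assume $L_k\to L$ in the Hausdorff metric. From $f(L_k)=\{y_k\}\to y$ one gets $L\subset f^{-1}(y)$, and the uniform estimate together with Hausdorff convergence gives $\diam g(L)\ge 1/n$ (take extremal pairs in $L_k$ and pass to their limits in $L$).

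Since $g\in\mathcal{K}(m,n,y)$ and $L\subset f^{-1}(y)$ with $\diam g(L)\ge 1/n$, there is $x_0\in L$ with $C:=C(x_0,g|f^{-1}(y))\subset B(L,1/m)$. The fibre $F=g^{-1}(g(x_0))\cap f^{-1}(y)$ is compact, so its component $C$ coincides with the corresponding quasi-component; hence there is a set $W$ that is clopen in $F$ with $C\subset W\subset B(L,1/m)$. As $W$ is compact in $B(L,1/m)$, in fact $W\subset B(L,r)$ for some $r<1/m$, so I can choose disjoint open sets $O_1,O_2\subset X$ with $W\subset O_1$, $F\setminus W\subset O_2$ and $O_1\subset B(L,r')$ for some $r<r'<1/m$. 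Because $L_k\to L$, for all large $k$ one then has $O_1\subset B(L_k,1/m)$, and choosing $x_k\in L_k$ with $x_k\to x_0\in W\subset O_1$ gives $x_k\in O_1$ eventually.

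The crucial step is to show that $F_k:=g_k^{-1}(g_k(x_k))\cap f^{-1}(y_k)\subset O_1\cup O_2$ for all large $k$. If not, after passing to a subsequence there are $z_k\in F_k\setminus(O_1\cup O_2)$; by compactness of $P$ I may assume $z_k\to z$, so that $z\in f^{-1}(y)$ and $z\notin O_1\cup O_2$, while the estimates $\varrho(g_k,g)<1/k$ on $f^{-1}(y_k)$ force $g(z)=g(x_0)$, whence $z\in F\subset O_1\cup O_2$, a contradiction. Consequently $F_k\cap O_1$ is clopen in $F_k$ and contains $x_k$, so the component of $x_k$ satisfies $C(x_k,g_k|f^{-1}(y_k))\subset F_k\cap O_1\subset O_1\subset B(L_k,1/m)$, with $x_k\in L_k$ — contradicting the defining property of $L_k$. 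This contradiction proves the lemma.

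I expect the main obstacle to be exactly this last persistence step. Components of fibres do not vary continuously (they can jump under arbitrarily small changes of the map or of the point $y$), so one cannot simply pass to a limit of the components $C(x_k,g_k|f^{-1}(y_k))$. The entire purpose of replacing $C$ by the clopen set $W$ and then by the ambient disjoint open sets $O_1,O_2$ is to extract a separation that is robust: once $F_k$ is trapped inside $O_1\cup O_2$, the clopen trace $F_k\cap O_1$ automatically confines the component of $x_k$, and this is what survives the perturbation of both $y$ and $g$.
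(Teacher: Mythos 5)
Your proof is correct, and while the overall frame (negate, extract $y_k\to y$, $g_k$, bad continua $L_k$, pass to a Hausdorff limit $L\subset f^{-1}(y)$ with $\diam g(L)\ge 1/n$, invoke $g\in\mathcal{K}(m,n,y)$) coincides with the paper's, the key final step is genuinely different. The paper takes points $t_k\in F_k$ converging to the good point $t$, passes to the Hausdorff limit $C$ of the components $C(t_k,g_k|f^{-1}(y_k))$, and observes that $C$ is a continuum containing $t$ on which $g$ is constant and which lies in $f^{-1}(y)$, hence $C\subset C(t,g|f^{-1}(y))\subset B(F,1/m)$; since each $C(t_k,\cdot)$ keeps a point at distance $\ge 1/m$ from $F_k$, the limit $C$ keeps a point at distance $\ge 1/m$ from $F$, a contradiction. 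That is the upper-semicontinuity direction for components (the limit of components sits inside a component of the limit fibre), which is exactly the direction that \emph{does} hold; your concern that one ``cannot pass to a limit of the components'' applies only to lower semicontinuity, which neither argument needs. Your alternative --- replacing the good component by a clopen subset $W$ of the compact fibre $F$ (components equal quasi-components there), fattening $W$ and $F\setminus W$ to disjoint open sets $O_1,O_2$ with $O_1\subset B(L,r')$, $r'<1/m$, and showing by a compactness argument that $F_k\subset O_1\cup O_2$ eventually, so that $F_k\cap O_1$ is clopen in $F_k$ and confines $C(x_k,g_k|f^{-1}(y_k))$ inside $B(L_k,1/m)$ --- is a valid and self-contained substitute. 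It is somewhat longer than the paper's route, but it buys an explicit justification for the two steps the paper dispatches with ``Note that $C\subset C(t,g|f^{-1}(y))$'' and ``it is easy to see,'' and the persistence-of-separation mechanism is robust and reusable. Both proofs are sound.
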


\begin{proof}
Indeed, otherwise we can find a local base $\{V_k\}_{k \in
\mathbb{N}}$ of neighborhoods of $y$ in $Y$, points $y_k \in V_k$
and maps $g_k \in C(X,M)$ such that $\varrho(g_k(x), g(x)) < 1/k$
for all $x \in f^{-1}(y_k)$ but $g_k$ does not belong to
$K(m,n,y_k)$. Consequently, for every $k$ there exists a continuum
$F_k \subset f^{-1}(y_k)$ such that diam$g_k(F_k) \ge 1/n$ and
$C(x,g_k|f^{-1}(y_k))$ is not a subset of $B(F_k, 1/m)$ for any $x
\in F_k$. Then all $F_k$ are contained in the compact set
$P=f^{-1}(\{y_k\}_{k \in \mathbb{N}} \cup \{y\})$. We may assume
that $\{F_k\}_{k \in \mathbb{N}}$ converges to a continuum $F$. It
follows that $F \subset f^{-1}(y)$ and diam$g(F) \ge 1/n$. Since $g
\in K(m,n,y)$ there exists $t \in F$ such that $C(t,g|f^{-1}(y))
\subset B(F, 1/m)$. Since lim$F_k=F$, for each $k$ there exists $t_k
\in F_k$ with lim$t_k=t$. We may assume that $\{C(t_k,
g_k|f^{-1}(y_k))\}_{k \in \mathbb{N}}$ converges to a continuum $C$.
Note that $C \subset C(t, g|f^{-1}(y))$. Since
$C(t_k,g_k|f^{-1}(y_k))\backslash B(F_k,1/m)\neq\varnothing$, it is
easy to see that $C$ is not contained in $B(F,1/m)$. This is a
contradiction.
\end{proof}

Now, we are in a position to show that the sets $\mathcal{K}(m,n,H)$
are open in $C(X,M)$.

\begin{pro}\label{open} For
any closed $H\subset Y$ and any $m,n\geq 1$, the set
$\mathcal{K}(m,n,H)$ is open in $C(X,M)$ with respect to the source
limitation topology.
\end{pro}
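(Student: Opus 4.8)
The plan is to verify openness in the source limitation topology directly: fixing $g\in\mathcal{K}(m,n,H)$, I would produce a continuous function $\varepsilon\colon X\to(0,1]$ with $B_\varrho(g,\varepsilon)\subset\mathcal{K}(m,n,H)$. The engine is Lemma 2.3, which for every $y\in H$ supplies a neighborhood $V_y$ of $y$ in $Y$ and a number $\delta_y>0$ (which I may assume to be $\le 1$, since shrinking the tolerance only strengthens the hypothesis of Lemma 2.3) such that any $g_1$ with $\varrho\big(g_1(x),g(x)\big)<\delta_y$ for all $x\in f^{-1}(y')$, where $y'\in V_y$, lies in $\mathcal{K}(m,n,y')$.

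The crucial feature of Lemma 2.3 is that its closeness hypothesis is \emph{uniform over the whole fiber} $f^{-1}(y')$ and is governed by a single constant $\delta_y$ attached to a point $y$ with $y'\in V_y$. This suggests building $\varepsilon$ so that it is constant on fibers, i.e. of the form $\varepsilon=\delta\circ f$ for a suitable continuous $\delta\colon Y\to(0,1]$. What I then need from $\delta$ is the following property: for every $y'\in H$ there exists $y\in H$ with $y'\in V_y$ and $\delta(y')\le\delta_y$.

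To obtain such a $\delta$ I would use that $Y$, being metrizable, is paracompact. The family $\{V_y\}_{y\in H}\cup\{Y\setminus H\}$ is an open cover of $Y$, so it admits a subordinate locally finite partition of unity $\{\phi_y\}_{y\in H}\cup\{\phi_0\}$ with $\mathrm{supp}\,\phi_y\subset V_y$ and $\mathrm{supp}\,\phi_0\subset Y\setminus H$. Setting $\delta=\phi_0+\sum_{y\in H}\delta_y\phi_y$ yields a continuous map into $(0,1]$, being a convex combination of numbers in $(0,1]$. For $y'\in H$ one has $\phi_0(y')=0$, so $\delta(y')$ is a convex combination of those $\delta_y$ with $\phi_y(y')>0$; since a convex combination never exceeds its largest entry, $\delta(y')\le\delta_{y}$ for the index $y$ realizing this maximum (attained, by local finiteness), and for that same $y$ we have $y'\in\{\phi_y>0\}\subset\mathrm{supp}\,\phi_y\subset V_y$. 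Thus $\delta$ has the required property.

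Finally I would put $\varepsilon=\delta\circ f$ and check the inclusion. If $g_1\in B_\varrho(g,\varepsilon)$ and $y'\in H$, then for every $x\in f^{-1}(y')$ we get $\varrho\big(g_1(x),g(x)\big)<\varepsilon(x)=\delta(y')\le\delta_y$, where $y$ is as above with $y'\in V_y$; Lemma 2.3 then gives $g_1\in\mathcal{K}(m,n,y')$, and since $y'\in H$ was arbitrary, $g_1\in\mathcal{K}(m,n,H)$, proving openness. I expect the only delicate point to be the interplay between the existential quantifier ``there exists $y$'' in the required property of $\delta$ and the averaging inherent in a partition of unity; the observation that a convex combination is dominated by its largest coefficient is exactly what reconciles the two, while making $\varepsilon$ factor through $f$ is what converts Lemma 2.3's fiberwise hypothesis into a genuine neighborhood of $g$ in the source limitation topology.
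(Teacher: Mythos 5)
Your proof is correct and follows the same overall strategy as the paper's: invoke Lemma 2.3 at each $y\in H$ to obtain the pairs $(V_y,\delta_y)$, manufacture a continuous $\delta\colon Y\to(0,1]$ whose value at every $y'\in H$ is dominated by $\delta_y$ for some $y$ with $y'\in V_y$, and pull this back through $f$ so that the resulting tolerance $\varepsilon=\delta\circ f$ is constant on fibers, which is exactly what converts the fiberwise hypothesis of Lemma 2.3 into a source-limitation neighborhood. The only point of divergence is how $\delta$ is produced: the paper refines $\{V_y\cap H\}$ to a locally finite family, forms the lower semi-continuous set-valued map $\psi(y)=\bigcup\{(0,\delta_z]:y\in V_z\}$, cites the Repov\v{s}--Semenov selection theorem to get a continuous selection $\beta$ on $H$, and then extends $\beta$ over $Y$ before composing with $f$; you instead take a locally finite partition of unity on all of $Y$ subordinate to $\{V_y\}_{y\in H}\cup\{Y\setminus H\}$ and set $\delta=\phi_0+\sum_y\delta_y\phi_y$, recovering the needed existential statement from the observation that a convex combination is bounded by its largest term (attained by local finiteness). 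Your route is more elementary and self-contained --- it avoids both the selection theorem and the separate extension step --- and is sound; the two constructions are otherwise interchangeable.
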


\begin{proof} Let $g_0 \in K(m,n,H)$. Then, by Lemma 2.3,
for every $y \in H$ there exist a neighborhood $V_y$ and a positive
$\delta_y \le 1$ such that $g \in K(m,n,y')$ provided $g|f^{-1}(y')$
is $\delta_y$-closed to $g_0|f^{-1}(y')$. The family $\{V_y \cap H:
y \in H \}$ can be supposed to be locally finite in $H$. Consider
the set-valued lower semi-continuous map $\psi:H \to (0,1]$,
$\psi(y)= \bigcup \{ (0,\delta_z]: y \in V_z\}$. By \cite[Theorem
6.2, p.116]{rs}, $\psi$ admits a continuous selection $\beta: H \to
(0,1]$. Let $\overline{\beta} : Y \to (0,1]$ be a continuous
extension of $\beta$ and $\alpha = \overline{\beta} \circ f$. It
remains only to show that if $g \in C(X,M)$ with
$\varrho(g_0(x),g(x)) < \alpha(x)$ for all $x \in X$, then $g \in
K(m,n,y)$ for all $y\in H$. So, we take such a $g$ and fix $y \in
H$. Then there exists $z \in H$ with $y \in V_z$ and $\alpha(x) \le
\delta_z$ for all $x \in f^{-1}(y)$. Hence, $\varrho(g(x),g_0(x))<
\delta_z$ for each $x \in f^{-1}(y)$. According to the choice of
$V_z$, $g \in K(m,n,y)$. This completes the proof.
\end{proof}

\subsection{$\mathcal{K}(m,n,H)$ is dense in $C(X,M)$ for finite-dimensional $H$}
In this subsection we show that $\mathcal{K}(m,n,H)$ is dense in
$C(X,M)$ with respect to the source limitation topology provided
$H\subset Y$ is a closed finite-dimensional subset and $M$ a
Krasinkiewicz complete $ANR$-space.  We need to show that
$B_\varrho(g,\e)=\{g'\in C(X,M):\varrho(g,g')<\e\}$ meets
$\mathcal{K}(m,n,H)$ for every $g\in C(X,M)$ and every continuous
function $\e\colon X\to (0,1]$, where $\varrho$ is a complete metric
on $M$ satisfying the hypotheses of Lemma 2.2. To this end, fix
$g_0\in C(X,M)$ and $\e\in C(X,(0,1/64])$. Consider the set-valued
map $\Phi_{\e}\colon Y\to C(X,M)$,
$\Phi_\e(y)=\mathcal{K}(m,n,y)\cap B_\varrho(g_0,\e)$, where
$C(X,M)$ carries the compact open topology.

\begin{lem} Let $y_0\in Y$ and $\Phi_\e(y_0)$ contain a compact set $K$.
Then there exists a neighborhood $V(y_0)$ of $y_0$ such that
$K\subset\Phi_\e(y)$ for every $y\in V(y_0)$.
\end{lem}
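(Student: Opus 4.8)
The plan is to argue by contradiction, following the scheme of Lemma 2.3, but now exploiting the compactness of $K$ in the compact-open topology. First I would dispose of the ball $B_\varrho(g_0,\e)$: since it does not depend on $y$ and $K\subset\Phi_\e(y_0)\subset B_\varrho(g_0,\e)$, every $g\in K$ automatically lies in $B_\varrho(g_0,\e)$; hence it suffices to produce a neighborhood $V(y_0)$ with $g\in\mathcal{K}(m,n,y)$ for all $g\in K$ and all $y\in V(y_0)$. Assuming no such neighborhood exists, I would use a countable local base $\{V_k\}_{k\in\IN}$ at $y_0$ (recall $Y$ is metrizable) to select points $y_k\in V_k$ and maps $g_k\in K$ with $g_k\notin\mathcal{K}(m,n,y_k)$; in particular $y_k\to y_0$.

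The central device is to pass to the set $P=f^{-1}\big(\{y_k:k\in\IN\}\cup\{y_0\}\big)$, which is compact because $f$ is perfect and $\{y_k:k\in\IN\}\cup\{y_0\}$ is compact. The restriction map $r\colon C(X,M)\to C(P,M)$, $r(g)=g|P$, is continuous from the compact-open topology to the uniform topology on $C(P,M)$ (these coincide since $P$ is compact), so $r(K)=K|P$ is a compact subset of the metric space $C(P,M)$, and is therefore sequentially compact. Passing to a subsequence I may thus assume that $g_k|P$ converges uniformly on $P$ to $g|P$ for some $g\in K$.

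Now $g\in K\subset\Phi_\e(y_0)$ gives $g\in\mathcal{K}(m,n,y_0)$, so Lemma 2.3 supplies a neighborhood $V_{y_0}$ of $y_0$ and a $\delta>0$ such that any $y'\in V_{y_0}$ and any $g_1$ with $\varrho\big(g_1(x),g(x)\big)<\delta$ for all $x\in f^{-1}(y')$ satisfy $g_1\in\mathcal{K}(m,n,y')$. For all large $k$ we have $y_k\in V_{y_0}$, and since $f^{-1}(y_k)\subset P$ while $g_k|P\to g|P$ uniformly, we also have $\varrho\big(g_k(x),g(x)\big)<\delta$ for every $x\in f^{-1}(y_k)$. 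Applying Lemma 2.3 with $y'=y_k$ and $g_1=g_k$ forces $g_k\in\mathcal{K}(m,n,y_k)$, contradicting the choice of $g_k$. This contradiction yields the required $V(y_0)$.

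The step I expect to be the main obstacle is the passage from compactness of $K$ to genuinely usable convergence. The compact-open topology on $C(X,M)$ need not be metrizable, and $K$ need not be sequentially compact, when $X$ fails to be hemicompact; so a naive extraction of a convergent subsequence inside $K$ is not legitimate. The point that makes the argument go through is that everything relevant to the fibers $f^{-1}(y_k)$ already takes place on the single compact set $P$, where the restricted family $K|P$ lives in an honest metric space and sequential compactness is available; moreover only uniform convergence on $P$, not on all of $X$, is needed to feed Lemma 2.3.
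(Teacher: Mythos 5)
Your proposal is correct and follows essentially the same route as the paper's proof: contradiction via a sequence $y_k\to y_0$ with witnesses $g_k\in K\setminus\Phi_\e(y_k)$, restriction to the compact set $P=f^{-1}(\{y_0\}\cup\{y_k\})$ to extract a uniformly convergent subsequence with limit $g|P$ for some $g\in K$, and then an application of Lemma 2.3 at $g$. Your explicit remarks on why $K$ itself need not be sequentially compact in $C(X,M)$ and on discharging the $B_\varrho(g_0,\e)$ condition are sound refinements of points the paper leaves implicit.
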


\begin{proof}
Suppose there exists a sequence $\{y_j\}_{j\geq 1}$ converging to
$y_0$ in Y such that $K\backslash\Phi_\e(y_j)\neq\emptyset$. Let
$g_j\in K\backslash\Phi_\e(y_j)$, $j\geq 1$, and
$P=f^{-1}(\{y_0\}\cup\{y_j\}_{j\geq 1})$. The restriction map
$\pi_P\colon C(X,M)\to C(P,M)$ is continuous when both $C(X,M)$ and
$C(P,M)$ are equipped with the compact open topology. Moreover, the
compact open topology on $C(P,M)$ coincides with the uniform
convergence. Hence, there exists a subsequence $\{g_{j_k}\}$ of
$\{g_j\}$ such that $\pi_P(g_{j_k})$ converges to $\pi_P(g)$ in
$C(P,M)$ for some $g\in K$. Since $g\in\mathcal{K}(m,n,y_0)$, we can
apply Lemma 2.3 to find a neighborhood $V$ of $y_0$ in $Y$ and a
positive $\delta>0$ such that $y'\in V$ and
$\varrho\big(g(x),g'(x)\big)<\delta$ for all $x\in f^{-1}(y')$
implies $g'\in\mathcal{K}(m,n,y')$. Now, choose $j_k$ with
$y_{j_k}\in V$ and $\varrho\big(g(x),g_{j_k}(x)\big)<\delta$ for any
$x\in f^{-1}(y_{j_k})$. Then $g_{j_k}\in\mathcal{K}(m,n,y_{j_k})$.
So, $g_{j_k}\in\Phi_\e(y_{j_k})$ which contradicts the choice of the
functions $g_j$.
\end{proof}

\begin{lem} Every
$\Phi_\e(y)$ has the following property: If
$\hat{v}\colon\mathbb{S}^k\to\Phi_\e(y)$ is continuous, where $k\geq
0$ and $\mathbb{S}^k$ is the $k$-sphere, then $\hat{v}$ can be
extended to a continuous map
$\displaystyle\hat{u}\colon\mathbb{B}^{k+1}\to\Phi_{64\e}(y)$.
\end{lem}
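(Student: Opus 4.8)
The plan is to pass to the single fibre $Z=f^{-1}(y)$, which is compact since $f$ is perfect, and to use that membership in $\mathcal{K}(m,n,y)$ depends only on the restriction to $Z$. Via the exponential correspondence I replace $\hat{v}$ by the continuous map $v\colon\mathbb{S}^k\times Z\to M$, $v(s,z)=\hat{v}(s)(z)$. The main device is the following observation: if a map $u\colon\mathbb{B}^{k+1}\times Z\to M$ is such that $\bar{u}=(\pr,u)\colon\mathbb{B}^{k+1}\times Z\to\mathbb{B}^{k+1}\times M$, $\bar{u}(b,z)=(b,u(b,z))$, is a Krasinkiewicz map, then every slice $u_b=u(b,\cdot)\colon Z\to M$ is Krasinkiewicz, hence $u_b\in\mathcal{K}(m,n,y)$ for all $m,n$. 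Indeed, for a subcontinuum $L\subset Z$ with $\mathrm{diam}\,u_b(L)>0$ the slice-continuum $\{b\}\times L$ has non-degenerate $\bar{u}$-image, so it contains a component of a fibre of $\bar{u}$; every fibre of $\bar{u}$ equals $\{b\}\times u_b^{-1}(p)$, so every such component has the form $\{b\}\times C$ with $C$ a component of a fibre of $u_b$, whence $C\subset L$. Thus it suffices to produce such a $u$ extending $v$ and staying close to $g_0$, then to spread it over $X$.

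The construction uses two applications of Lemma 2.2, which accounts for the constant $64=8\cdot 8$ (and for the earlier choice $\e\le 1/64$, so that $64\e\le 1$). First, taking $(b,z)\mapsto g_0(z)$ as reference map and $8\e$ as control, I extend $v$ from $\mathbb{S}^k\times Z$ across $\mathbb{B}^{k+1}\times Z$ to $\tilde{u}$ with $\varrho(\tilde{u}(b,z),g_0(z))<8\e(z)$; on the boundary the data satisfies $\varrho(v,g_0)<\e=8\e/8$, as the lemma requires. Next, using the Krasinkiewicz property of $M$, I approximate $\tilde{u}$ relative to the boundary by a map $u$ with $\varrho(u,\tilde{u})$ so small that still $\varrho(u(b,z),g_0(z))<8\e(z)$ and such that every slice $u_b$ lies in $\mathcal{K}(m,n,y)$; the mechanism (the slice-observation applied off a boundary collar) is discussed below. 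Finally, $u$ and the map $(s,x)\mapsto\hat{v}(s)(x)$ agree on $\mathbb{S}^k\times Z$, so their common value is defined and $8\e$-close to $g_0$ on the closed set $(\mathbb{B}^{k+1}\times Z)\cup(\mathbb{S}^k\times X)$ of the paracompact space $\mathbb{B}^{k+1}\times X$; a second application of Lemma 2.2, with control $64\e$ (the data being $<8\e=64\e/8$), extends it to $U\colon\mathbb{B}^{k+1}\times X\to M$ with $\varrho(U(b,x),g_0(x))<64\e(x)$. Its adjoint $\hat{u}\colon\mathbb{B}^{k+1}\to C(X,M)$ then extends $\hat{v}$, each $\hat{u}(b)$ lies in $B_\varrho(g_0,64\e)$, and each restriction $\hat{u}(b)|Z=u_b\in\mathcal{K}(m,n,y)$; hence $\hat{u}(\mathbb{B}^{k+1})\subset\Phi_{64\e}(y)$.

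The essential difficulty is the middle step: approximating $\tilde{u}$, without changing it on $\mathbb{S}^k\times Z$, by a $u$ whose fibrewise map $\bar{u}=(\pr,u)$ is Krasinkiewicz. The obstruction is that on the boundary the slices $v_s$ are only assumed to satisfy the $(m,n)$-condition, not to be genuine Krasinkiewicz maps, so $\bar{u}$ cannot be made Krasinkiewicz on all of $\mathbb{B}^{k+1}\times Z$. My plan is to introduce a boundary collar $\mathbb{S}^k\times[1/2,1]$ on which $u$ is left equal to $v$ in the collar direction, so those slices stay in $\mathcal{K}(m,n,y)$ (here the openness of $\mathcal{K}(m,n,y)$ in $C(Z,M)$, which follows from Lemma 2.3, gives the needed room), and to carry out the Krasinkiewicz-over-the-base approximation on the inner ball, where there is full freedom and where the slice-observation then yields $u_b\in\mathcal{K}(m,n,y)$. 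Producing a fibrewise-Krasinkiewicz map on the inner ball is the technical heart: it is obtained from the Krasinkiewicz property of $M$ by a Baire-category/general-position argument over the finite-dimensional base $\mathbb{B}^{k+1}$, which is where completeness of $M$ and the Baire property of the source-limitation topology are used, paralleling the scheme of \cite{v} and \cite{m}. I expect this parametric Krasinkiewicz approximation over the ball to be the main obstacle; the slice-observation and the two extension steps are then routine bookkeeping.
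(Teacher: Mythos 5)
Your skeleton matches the paper's: extend $v$ over $\mathbb{B}^{k+1}\times f^{-1}(y)$ with control $8\e$ via Lemma 2.2, make the extension fibrewise Krasinkiewicz, repair the boundary using the openness of $\pi_y\big(\mathcal{K}(m,n,y)\big)$ in $C(f^{-1}(y),M)$ (your collar is, in effect, the paper's cone-over-$\mathbb{S}^k$ homotopy controlled by the distance $\delta_1$ from the compact set $\pi_y\big(\hat{v}(\mathbb{S}^k)\big)$ to the complement of that open set), and finally extend over $\mathbb{B}^{k+1}\times X$ with control $64\e$. The slice observation you formulate for $(\pr,u)$ is also essentially the one the paper uses. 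But there is a genuine gap at exactly the point you flag as ``the technical heart'': you do not prove the existence of the fibrewise-Krasinkiewicz approximation over the ball, and the route you sketch for it --- a Baire-category/general-position argument over the finite-dimensional base $\mathbb{B}^{k+1}$, invoking completeness of $M$ and the Baire property of the source limitation topology --- is both uncarried-out and misdirected. Such a parametric approximation over the ball is precisely the kind of statement the whole paper is devoted to proving (Theorem 1.1 for the projection $\mathbb{B}^{k+1}\times f^{-1}(y)\to\mathbb{B}^{k+1}$), so taking it as an ingredient here would be close to circular.

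The actual resolution is a one-liner that your proposal misses: $\mathbb{B}^{k+1}\times f^{-1}(y)$ is itself a metrizable compactum, so the hypothesis that $M$ is a Krasinkiewicz space applies to it directly and yields a single Krasinkiewicz map $v_2\colon\mathbb{B}^{k+1}\times f^{-1}(y)\to M$ as close to the extension $v_1$ as desired (closeness $\delta/8$ with $\delta=\min\{\delta_1,\delta_2,\delta_3\}$ in the paper). One then checks --- by the same component argument as your slice observation, applied to $v_2$ itself rather than to $(\pr,v_2)$ --- that every slice $\hat{v}_2(z)\colon f^{-1}(y)\to M$ is a Krasinkiewicz map, hence lies in $\pi_y\big(\mathcal{K}(m,n,y)\big)$ once it is seen to extend over $X$ (which Lemma 2.2 gives from the bound $\varrho(v_2(z,x),g_0(x))<8\e(x)$). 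No category argument, no completeness of $M$, and no general position over the base are needed at this step; the quantification over \emph{all} compacta in the definition of a Krasinkiewicz space does all the work. With that substitution your boundary-collar gluing and the two (in fact three) applications of Lemma 2.2, with the $8\cdot 8=64$ bookkeeping, go through as in the paper.
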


\begin{proof} Let us mention
the following property of the function space $C(X,M)$ with the
compact open topology: For any metrizable space $Z$ a map
$\hat{w}\colon Z\to C(X,M)$ is continuous if and only if the map
$w\colon Z\times X\to M$, $w(z,x)=\hat{w}(z)(x)$, is continuous.
Hence, every map $\hat{v}\colon\mathbb{S}^k\to\Phi_\e(y)$ generates
a continuous map $v\colon\mathbb{S}^k\times X\to M$ defined by
$v(z,x)=\hat{v}(z)(x)$ such that
$\varrho\big(v(z,x),g_0(x)\big)<\e(x)$ for all
$(z,x)\in\mathbb{S}^k\times X$.



Let $\pi_y\colon C(X,M)\to C(f^{-1}(y),M)$ be the restriction map.
 It is easily seen that $\pi_y$ is continuous and
open when both $C(X,M)$ and $C(f^{-1}(y),M)$ are equipped with the
source limitation or the compact open topology. Since $f^{-1}(y)$ is
compact, the source limitation, the compact open and the uniform
convergence topologies on $C(f^{-1}(y),M)$ coincide. Therefore,
$\pi_y\big(\mathcal{K}(m,n,y)\big)$ is open in $C(f^{-1}(y),M)$ and
contains the compact set $\pi_y\big(\hat{v}(\mathbb{S}^k)\big)$.
Consequently, the distance (in the space $C(f^{-1}(y),M)$) between
$\pi_y\big(\hat{v}(\mathbb{S}^k)\big)$ and
$C(f^{-1}(y),M)\backslash\pi_y\big(\mathcal{K}(m,n,y)\big)$ is
positive. Denote this distance by $\delta_1$.

Obviously
$\delta_2=\inf\{\e(x)-\varrho\big(v(z,x),g_0(x)\big):(z,x)\in\mathbb{S}^k\times
f^{-1}(y)\}$ is positive. According to Lema 2.2, there exists a
continuous extension $v_1\colon\mathbb{B}^{k+1}\times f^{-1}(y)\to
M$ of the map $v|\big(\mathbb{S}^k\times f^{-1}(y)\big)$ with
$\varrho\big(v_1(z,x),g_0(x)\big)<8\e(x)$ for all
$(z,x)\in\mathbb{B}^{k+1}\times f^{-1}(y)$. Let
$\delta_3=\inf\{8\e(x)-\varrho\big(v_1(z,x),g_0(x)\big):(z,x)\in\mathbb{B}^{k+1}\times
f^{-1}(y)\}$. Since $M$ is a Krasinkiewicz space, there exists a
Krasinkiewicz map $v_2\colon\mathbb{B}^{k+1}\times f^{-1}(y)\to M$
such that $\varrho\big(v_2(z,x),v_1(z,x)\big)<\delta/8$ for all
$(z,x)\in\mathbb{B}^{k+1}\times f^{-1}(y)$, where
$\delta=\min\{\delta_1,\delta_2,\delta_3\}$. Therefore, we have a
map $\hat{v}_2\colon\mathbb{B}^{k+1}\to C(f^{-1}(y),M)$. The choice
of $\delta_3$ implies
$$\varrho\big(v_2(z,x),g_0(x)\big)<8\e(x)\leqno{(1)}$$
for all $(z,x)\in\mathbb{B}^{k+1}\times f^{-1}(y)$. Moreover, $v_2$
being a Krasinkiewicz map yields that all maps $\hat{v}_2(z)\colon
f^{-1}(y)\to M$, $z\in\mathbb{B}^{k+1}$, are also Krasinkiewicz. On
the other hand, by Lemma 2.2 and (1), every $\hat{v}_2(z)$ can be
extended to a map from $X$ into $M$. Therefore,
$$\hat{v}_2\big(\mathbb{B}^{k+1}\big)\subset\pi_y\big(\mathcal{K}(m,n,y)\big)
\leqno{(2)}.$$

Representing the ball $\mathbb{B}^{k+1}$ as a cone with a base
$\mathbb{S}^k$ and a vertex $z_0$, we can consider $v_2$ as a
homotopy from $\mathbb{S}^k\times f^{-1}(y)\times [0,1]$ into $M$
between the maps $v_2|\big(\mathbb{S}^k\times
f^{-1}(y)\times\{0\}\big)$ and $v_2|\big(\{z_0\}\times
f^{-1}(y)\big)$. Observe also that
$\varrho\big(v_2(z,x,0),v(z,x)\big)<\delta/8$ for any
$(z,x)\in\mathbb{S}^k\times f^{-1}(y)$. Hence, the map
$\varphi\colon\mathbb{S}^k\times f^{-1}(y)\times\{0,1\}\to M$,
$$\varphi(z,x,t)=
\begin{cases}
v(z,x)&\mbox{if $t=0$};\\
v_2(z,x,0)&\mbox{if $t=1$}.\\
\end{cases},$$
is $(\delta/8)$-close to $v$. Consequently, by Lemma 2.2, $\varphi$
admits a continuous extension $v_3\colon \mathbb{S}^k\times
f^{-1}(y)\times [0,1]\to M$ such that
$\varrho\big(v_3(z,x,t),v(z,x)\big)<\delta$ for every
$(z,x,t)\in\mathbb{S}^k\times f^{-1}(y)\times [0,1]$. Since
$\delta<\min\{\delta_1,\delta_2\}$, for any
$(z,x,t)\in\mathbb{S}^k\times f^{-1}(y)\times [0,1]$ we have
$$\varrho\big(v_3(z,x,t),v(z,x)\big)<\delta_1,
\leqno{(3)}$$ and
$$\varrho\big(v_3(z,x,t),g_0(x)\big)<\e(x).
\leqno{(4)}$$

Therefore, $v_3$ is a homotopy connecting the maps $v$ and
$v_2|\big(\mathbb{S}^k\times f^{-1}(y)\times\{0\}\big)$, while $v_2$
is a homotopy connecting the maps $v_2|\big(\mathbb{S}^k\times
f^{-1}(y)\times\{0\}\big)$ and $v_2|\big(\{z_0\}\times
f^{-1}(y)\big)$. Combining these two homotopies, we obtain a map
$u_1\colon\mathbb{S}^k\times f^{-1}(y)\times [0,1]\to M$ such that
$u_1(z,x,0)=v(z,x)$, $u_1(z,x,1)=v_2(z_0,x)$ and
$\varrho\big(u_1(z,x,t),g_0(x)\big)<8\e(x)$ for all
$(z,x,t)\in\mathbb{S}^k\times f^{-1}(y)\times [0,1]$. Obviously,
$u_1$ can also be considered as a map from $\mathbb{B}^{k+1}\times
f^{-1}(y)$ into $M$ such that $u_1|\big(\mathbb{S}^{k}\times
f^{-1}(y)\big)=v$ and $\varrho\big(u_1(z,x),g_0(x)\big)<8\e(x)$,
$(z,x)\in\mathbb{B}^{k+1}\times f^{-1}(y)$. Now consider the map
$u_2\colon\big(\mathbb{B}^{k+1}\times
f^{-1}(y)\big)\cup\big(\mathbb{S}^{k}\times X\big)\to M$ with
$u_2|\big(\big(\mathbb{B}^{k+1}\times f^{-1}(y)\big)=u_1$ and
$u_2|\big(\big(\mathbb{S}^{k}\times X\big)=v$. Finally, using Lemma
2.2, we extend $u_2$ to a map $u\colon\mathbb{B}^{k+1}\times X\to M$
such that
$$\varrho\big(u(z,x),g_0(x)\big)<64\e(x)\leqno{(5)}$$
for any $(z,x)\in\mathbb{B}^{k+1}\times X$. Then
$\hat{u}\colon\mathbb{B}^{k+1}\to C(X,M)$ extends the map $\hat{v}$.
Moreover, (2), (3) and the choice of $\delta_1$ implies that
$\hat{u}\big(\mathbb{B}^{k+1}\big)\subset\mathcal{K}(m,n,y)$. On the
other hand, (5) yields $\hat{u}\big(\mathbb{B}^{k+1}\big)\subset
B_\varrho(g_0,64\e)$. Hence,
$\hat{u}\big(\mathbb{B}^{k+1}\big)\subset\Phi_{64\e}(y)$.
\end{proof}

Next proposition completes the proof of Theorem 1.1 in the case $Y$
is strongly countable-dimensional.

\begin{pro} Let $H\subset Y$ be a closed finite-dimensional set.
Then $\mathcal{K}(m,n,H)$, $m,n\geq 1$, are dense sets in $C(X,M)$
with respect to the source limitation topology.
\end{pro}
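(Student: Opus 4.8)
The plan is to realize a point of $\mathcal{K}(m,n,H)$ inside $B_\varrho(g_0,\e)$ by a selection-and-extension argument, following the finite-dimensional scheme of \cite{v}. By the extension Lemma 2.2 it is enough to construct a continuous map $g\colon f^{-1}(H)\to M$ with $g|f^{-1}(y)\in\mathcal{K}(m,n,y)$ for all $y\in H$ and $\varrho(g(x),g_0(x))<\e(x)/8$ on $f^{-1}(H)$; extending such a $g$ over $X$ will then give the desired element of $\mathcal{K}(m,n,H)\cap B_\varrho(g_0,\e)$. To produce $g$, consider the set-valued map $\Phi_{\e/8}\colon H\to C(X,M)$, $\Phi_{\e/8}(y)=\mathcal{K}(m,n,y)\cap B_\varrho(g_0,\e/8)$, with $C(X,M)$ in the compact open topology (here $\e/8\in C(X,(0,1/64])$, so Lemmas 2.5 and 2.6 apply to it). Applied to the function $\e/8$, Lemma 2.5 shows, by taking singletons, that $\Phi_{\e/8}$ is lower semicontinuous, and, by taking arbitrary compact sets, that compact pieces of a value persist over a neighborhood; Lemma 2.6 shows that every $k$-sphere in a value $\Phi_{\e/8}(y)$ bounds a $(k+1)$-ball in the enlarged value $\Phi_{8\e}(y)$. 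For $k\le\dim H$ this is exactly the approximate connectivity needed below.

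Since $\dim H<\infty$, I would then invoke the finite-dimensional selection theorem in the form used in \cite{v}: the lower semicontinuity from Lemma 2.5, together with the approximate $k$-connectivity from Lemma 2.6 for $k\le\dim H$, yields a continuous selection $s\colon H\to C(X,M)$ with $s(y)\in\Phi_{\e/8}(y)$ for every $y\in H$. The completeness underlying the selection is available fiberwise, since each $C(f^{-1}(y),M)$ is completely metrizable and, as observed in the proof of Lemma 2.6, $\pi_y(\mathcal{K}(m,n,y))$ is open in it. The delicate point is that Lemma 2.6 only provides extensions into the enlarged sets $\Phi_{8\e}(y)$; this loss is absorbed in the usual way, by building $s$ as the limit of a convergent sequence of approximate selections constructed at geometrically decreasing error scales, so that the total deviation stays below $\e/8$.

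Given $s$, define $g\colon f^{-1}(H)\to M$ by $g(x)=s(f(x))(x)$. This is continuous: since $s$ is continuous into $C(X,M)$ with the compact open topology, the associated map $(y,x)\mapsto s(y)(x)$ on $H\times X$ is continuous, and $x\mapsto(f(x),x)$ maps $f^{-1}(H)$ continuously into $H\times X$. As membership in $\mathcal{K}(m,n,y)$ depends only on the restriction to $f^{-1}(y)$ and $g|f^{-1}(y)=s(y)|f^{-1}(y)$, we get $g|f^{-1}(y)\in\mathcal{K}(m,n,y)$ for each $y\in H$, and $\varrho(g,g_0)<\e/8$ on $f^{-1}(H)$ by the choice of $\Phi_{\e/8}$. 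The set $f^{-1}(H)$ is closed in $X$, so Lemma 2.2 with $\varphi=g_0$, $A=f^{-1}(H)$ and $\alpha=\e$ extends $g$ to a map $\bar{g}\colon X\to M$ with $\varrho(\bar{g},g_0)<\e$ on $X$. Since $\bar{g}$ agrees with $g$ on $f^{-1}(H)\supset f^{-1}(y)$, the fiber condition is preserved, so $\bar{g}\in\mathcal{K}(m,n,y)$ for all $y\in H$; hence $\bar{g}\in\mathcal{K}(m,n,H)\cap B_\varrho(g_0,\e)$, as required.

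The main obstacle is the selection step in the second paragraph: converting the merely approximate, $64$-fold-enlarging extension property of Lemma 2.6 into an honest continuous selection over the finite-dimensional base $H$, with the accumulated deviation from $g_0$ kept below the prescribed tolerance. The remaining ingredients---lower semicontinuity, the gluing formula, and the final extension---are routine consequences of Lemmas 2.2, 2.5 and 2.6.
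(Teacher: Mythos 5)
Your proposal is correct and follows essentially the same route as the paper: the paper likewise builds a continuous selection of $y\mapsto\Phi_{\e/8}(y)$ over $H$ from the compact-persistence of Lemma 2.5 and the extension property of Lemma 2.6, defines $g(x)=\theta(f(x))(x)$ on $f^{-1}(H)$, and extends by Lemma 2.2. The only difference is one of packaging: where you absorb the $64$-fold enlargement by a limit of approximate selections at geometrically decreasing scales, the paper makes this precise as the finite filtration $\Phi_j(y)=\Phi_{\e/8^{2(k-j)+1}}(y)$, $j=0,\dots,k=\dim H$, and quotes Gutev's finite-dimensional selection theorem for exactly such filtered systems.
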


\begin{proof} Let
$\dim H\leq k$. Define the set-valued maps $\Phi_j\colon H\to
C(X,M)$, $j=0,..,k$,
$\displaystyle\Phi_j(y)=\Phi_{\e/8^{2(k-j)+1}}(y)$. Obviously,
$\Phi_0(y)\subset\Phi_1(y)\subset...\subset\Phi_k(y)=\Phi_{\e/8}(y)$.
According to Lemma 2.6, every map from $\mathbb{S}^k$ into
$\Phi_j(y)$ can be extended to a map from $\mathbb{B}^{k+1}$ into
$\Phi_{j+1}(y)$, where $j=0,1,..,k-1$ and $y\in H$. Moreover, by
Lemma 2.5, any $\Phi_j(y)$ has the following property: if
$K\subset\Phi_j(y)$ is compact, then there exists a neighborhood
$V_y$ of $y$ in $Y$ such that $K\subset\Phi_j(z)$ for all $z\in
V_y\cap H$. So, we may apply \cite[Theorem 3.1]{gu} to find a
continuous selection $\theta\colon H\to C(X,M)$ of $\Phi_k$. Hence,
$\theta(y)\in\Phi_{\e/8}(y)$ for all $y\in H$. Now, consider the map
$g\colon f^{-1}(H)\to M$, $g(x)=\theta(f(x))(x)$. Using that
$C(X,M)$ carries the compact open topology, one can show that $g$ is
continuous. Moreover, $\varrho\big(g(x),g_0(x)\big)<\e(x)/8$ for all
$x\in f^{-1}(H)$. Then, by Lemma 2.2, $g$ can be extended to a
continuous map $\bar{g}\colon X\to M$ with
$\varrho\big(\bar{g}(x),g_0(x)\big)<\e(x)$, $x\in X$. It follows
from the definition of $g$ that $g|f^{-1}(y)=\theta(y)|f^{-1}(y)$
for every $y\in H$. Since $\theta(y)\in\mathcal{K}(m,n,y)$ for all
$y\in H$,  $\bar{g}\in\mathcal{K}(m,n,H)$. Hence,
$B_\varrho(g_0,\e)\cap\mathcal{K}(m,n,H)\neq\emptyset$.
\end{proof}

\subsection{$\mathcal{K}(m,n,H)$ is dense in $C(X,M)$ for $H$ being a $C$-space}
We now turn to the proof of Theorem 1.1 in the case $Y$ is a
$C$-space and $M$ a Krasinkiewicz space homeomorphic to a closed
convex subset $M'$ of a given Banach space $E$. Suppose $M=M'$ and
let $\varrho$ be the metric on $M$ inherited from the norm of $E$
and $\Psi_\e\colon Y\to C(X,M)$ be the set-valued map
$\Psi_\e(y)=\overline{B}_\varrho(g_0,\e)\cap\mathcal{K}(m,n,y)$,
where $C(X,M)$ is equipped again with the compact open topology and

$$\overline{B}_\varrho(g_0,\e)=\{g\in
C(X,M):\varrho\big(g_0(x),g(x)\big)\leq\e(x)\hbox{~}\mbox{for
all}\hbox{~}x\in X\}.$$

\begin{lem}
$\Psi_\e$ has the following property: Every map
$\hat{v}\colon\mathbb{S}^k\to\Psi_\e(y)$, $n\geq 0$, can be extended
to a map $\displaystyle\hat{u}\colon\mathbb{B}^{k+1}\to\Psi_\e(y)$.
\end{lem}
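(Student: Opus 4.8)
The plan is to follow the scheme of Lemma 2.6 almost verbatim, replacing every appeal to the $ANR$-extension Lemma 2.2 by a construction that uses the convexity of $M=M'$; this is precisely what removes the loss of the factor $64$ and lets the extension land back in $\Psi_\e(y)$ rather than in $\Psi_{64\e}(y)$. As there, a continuous $\hat v\colon\mathbb{S}^k\to\Psi_\e(y)$ is the same datum as a continuous $v\colon\mathbb{S}^k\times X\to M$, $v(z,x)=\hat v(z)(x)$, with $\varrho\big(v(z,x),g_0(x)\big)\le\e(x)$ everywhere and with each $v(z,\cdot)\in\mathcal{K}(m,n,y)$. The one simplification over Lemma 2.6 is that the Krasinkiewicz requirement for membership in $\Psi_\e(y)$ involves only the fibre $f^{-1}(y)$; hence it suffices to produce the desired ball of maps over $\mathbb{B}^{k+1}\times f^{-1}(y)$ first and then to extend over the remaining part of $X$ in a way that respects the bound and does not alter the values on $f^{-1}(y)$.

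To build a map $\Theta\colon\mathbb{B}^{k+1}\times f^{-1}(y)\to M$ extending $v$ on the boundary, with every slice in $\mathcal{K}(m,n,y)$ and $\varrho\big(\Theta(w,x),g_0(x)\big)\le\e(x)$, I would first cone towards $g_0$: setting $\tilde v(z,x)=(1-\eta)g_0(x)+\eta\,v(z,x)$ for an $\eta<1$ close to $1$ and $v_1(w,x)=(1-\|w\|)g_0(x)+\|w\|\,\tilde v(w/\|w\|,x)$, the map $v_1$ is continuous and satisfies $\varrho\big(v_1(w,x),g_0(x)\big)\le\eta\,\e(x)$, leaving a buffer inside the closed $\e$-ball. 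As in Lemma 2.6 let $\delta_1>0$ be the distance in $C(f^{-1}(y),M)$ from the compact set $\pi_y\big(\hat v(\mathbb{S}^k)\big)$ to the complement of the open set $\pi_y\big(\mathcal{K}(m,n,y)\big)$, and use the Krasinkiewicz property of $M$ to approximate $v_1$ on the compactum $\mathbb{B}^{k+1}\times f^{-1}(y)$ by a Krasinkiewicz map $v_2$ with $\varrho(v_2,v_1)<\delta$; every slice of $v_2$ over $f^{-1}(y)$ is then a Krasinkiewicz map. Finally I would reconcile $v_2$ with $v$ across a boundary collar by the straight-line homotopy between them. With $\eta$ taken close enough to $1$ and $\delta$ small, the reconciling slices remain within $\delta_1$ of the boundary slices $v(z,\cdot)$, hence stay in $\mathcal{K}(m,n,y)$ by the choice of $\delta_1$, while convexity of the balls $\overline{B}_\varrho\big(g_0(x),\e(x)\big)\cap M'$ keeps every value of $\Theta$ within $\e(x)$ of $g_0(x)$.

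It then remains to extend over $X$. I would glue $\Theta$ and $v$ to a map on the closed set $A=\big(\mathbb{B}^{k+1}\times f^{-1}(y)\big)\cup\big(\mathbb{S}^k\times X\big)$, where they agree along the overlap $\mathbb{S}^k\times f^{-1}(y)$, extend this map to $\mathbb{B}^{k+1}\times X$ by Dugundji's theorem into the convex set $M'$, and finally compose with the fibrewise radial retraction $r_x(p)=g_0(x)+\min\{1,\e(x)/\|p-g_0(x)\|\}\,(p-g_0(x))$ of $M'$ onto $\overline{B}_\varrho\big(g_0(x),\e(x)\big)\cap M'$. Since $M'$ is convex and $g_0(x)\in M'$, each $r_x$ maps into $M'$ and fixes every point already within $\e(x)$ of $g_0(x)$. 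As the Dugundji extension already respects the bound on $A$, the retraction leaves it unchanged there; in particular the resulting map still restricts to $\Theta$ over $f^{-1}(y)$ and to $v$ on $\mathbb{S}^k\times X$, it now satisfies $\varrho(\cdot,g_0)\le\e$ everywhere, and its slices over $f^{-1}(y)$ are untouched and hence still Krasinkiewicz. The corresponding $\hat u\colon\mathbb{B}^{k+1}\to C(X,M)$ therefore maps into $\Psi_\e(y)$ and extends $\hat v$.

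The main obstacle is making the two demands coexist through the gluing: the tight closed-ball bound $\varrho(\cdot,g_0)\le\e$ and the Krasinkiewicz property of every slice over $f^{-1}(y)$. Convexity governs the bound at each step — the coning towards $g_0$, the convexity of the balls under the collar homotopy, and the radial retraction after the Dugundji extension — whereas the Krasinkiewicz property is preserved exactly as in Lemma 2.6, through the openness of $\mathcal{K}(m,n,y)$ and the $\delta_1$-estimate. The delicate part is the bookkeeping that reconciles these: the contraction factor $\eta$ must be chosen close enough to $1$ that the coned boundary lies within $\delta_1$ of $v$, yet the approximation radius $\delta$ must be small enough (below both $\delta_1$ and the buffer $(1-\eta)\min_{f^{-1}(y)}\e$) that the Krasinkiewicz perturbation $v_2$ never leaves the closed $\e$-ball.
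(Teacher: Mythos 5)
Your proposal is correct, and it reaches the conclusion by the same overall strategy as the paper (build the extension over $\mathbb{B}^{k+1}\times f^{-1}(y)$ first, then glue with $v$ on $\mathbb{S}^k\times X$ and extend over $\mathbb{B}^{k+1}\times X$ using convexity), but it implements both halves differently. For the first half, the paper shows that $P(y)=\overline{B}_\varrho(g_0,\e,y)\setminus\pi_y\big(\mathcal{K}(m,n,y)\big)$ is a $Z$-set in the convex set $\overline{B}_\varrho(g_0,\e,y)$ --- using exactly your convex push $(1-\lambda)w+\lambda g_0$ followed by a Krasinkiewicz approximation --- and then invokes Uspenskij's Proposition 6.3 to extend $\pi_y\circ\hat v$ over $\mathbb{B}^{k+1}$ inside the complement of the $Z$-set; you instead unpack that black box into an explicit cone toward $g_0$, a Krasinkiewicz approximation $v_2$, and a collar reconciliation controlled by the distance $\delta_1$ from the compact set $\pi_y\big(\hat v(\mathbb{S}^k)\big)$ to the complement of the open set $\pi_y\big(\mathcal{K}(m,n,y)\big)$. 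For the second half, the paper extends the glued map by Michael's convex-valued selection theorem applied to $\phi(z,x)=\mathrm{cl}\big(B_\varrho(g_0(x),\e(x))\big)$ off $A$, whereas you use Dugundji extension followed by the fibrewise radial retraction onto $\overline{B}_\varrho(g_0(x),\e(x))\cap M'$; both devices do the same job of keeping the extension inside the closed $\e$-ball without disturbing the values on $A$. Your version is more self-contained and makes visible where convexity enters (and why no factor of $64$ is lost, in contrast to Lemma 2.6), at the cost of the explicit bookkeeping between $\eta$, $\delta$, $\delta_1$ and the buffer $(1-\eta)\min_{f^{-1}(y)}\e$, which you do state correctly; the paper's version is shorter because it delegates precisely this bookkeeping to the two cited selection results. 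The only cosmetic point worth fixing is that your formula for $v_1$ is undefined at $w=0$ and should be declared to take the value $g_0(x)$ there, which is the continuous extension.
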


\begin{proof} All function
spaces in this proof are equipped with the compact open topology.
Let $\pi_y\colon C(X,M)\to C(f^{-1}(y),M)$ be the restriction map
and
$P(y)=\overline{B}_\varrho(g_0,\e,y)\backslash\pi_y\big(\mathcal{K}(m,n,y)\big)$,
where $\overline{B}_\varrho(g_0,\e,y)$ is the set
$$\{g\in
C(f^{-1}(y),M):\varrho\big(g_0(x),g(x)\big)\leq\e(x)\hbox{~}\mbox{for
all}\hbox{~}x\in f^{-1}(y)\}.$$  Since
$\pi_y\big(\mathcal{K}(m,n,y)$ is open in $C(f^{-1}(y),M)$,
$P(y)\subset\overline{B}_\varrho(g_0,\e,y)$ is closed.

We are going to show that $P(y)$ is a $Z$-set in
$\overline{B}_\varrho(g_0,\e,y)$, i.e., every map $\hat{w}\colon
K\to\overline{B}_\varrho(g_0,\e,y)$,  where $K$ is compact, can be
approximated by a map $\hat{w}_1\colon
K\to\overline{B}_\varrho(g_0,\e,y)\backslash
P(y)=\overline{B}_\varrho(g_0,\e,y)\cap\pi_y\big(\mathcal{K}(m,n,y)\big)$.
To this end, fix $\delta>0$ and let $w\colon K\times f^{-1}(y)\to M$
be the map generated by $\hat{w}$. So,
$\varrho\big(w(z,x),g_0(x)\big)\leq\e(x)$ for all $(z,x)\in K\times
f^{-1}(y)$. Since $f^{-1}(y)$ is compact, there exists $\lambda\in
(0,1)$ such that $\lambda\max\{\e(x):x\in f^{-1}(y)\}<\delta/2$.
Define the map $w_1\colon K\times f^{-1}(y)\to M$ by
$w_1(z,x)=(1-\lambda)w(z,x)+\lambda g_0(x)$. Then, for all $(z,x)\in
K\times f^{-1}(y)$ we have
$$\varrho\big(w_1(z,x),w(z,x)\big)\leq\lambda\e(x)<\delta/2$$
and
$$\varrho\big(w_1(z,x),g_0(x)\big)\leq(1-\lambda)\e(x)<\e(x).$$
Since $M$ is a Krasinkiewicz space, there exists a Krasinkiewicz map
$w_2\colon K\times f^{-1}(y)\to M$ which is $\delta_1$-close to
$w_1$, where $\delta_1=\min\{\lambda\e(x):x\in f^{-1}(y)\}$. Hence,
for every $(z,x)\in K\times f^{-1}(y)$ we have
$$\varrho\big(w_2(z,x),g_0(x)\big)\leq\e(x)\hbox{~}\mbox{and}\hbox{~}
\varrho\big(w_2(z,x),w(z,x)\big)<\delta.$$ The last two inequalities
imply that the map $\hat{w}_2\colon K\to C(f^{-1}(y),M)$ is
$\delta$-close to $\hat{w}$ and
$\hat{w}_2(K)\subset\overline{B}_\varrho(g_0,\e,y)$. Moreover, every
$\hat{w}_2(z)$, $z\in K$, being a map from $f^{-1}(y)$ into $M$, can
be extended to a map from $X$ to $M$ because $M$ is a closed convex
subset of $E$. Since $w_2$ is a Krasinkiewicz map, so are the maps
$\hat{w}_2(z)$, $z\in K$. Hence,
$\hat{w}_2(K)\subset\pi_y\big(\mathcal{K}(m,n,y)\big)$. So, $P(y)$
is a $Z$-set in $\overline{B}_\varrho(g_0,\e,y)$.

Let us complete the proof of the lemma. For every map
$\hat{v}\colon\mathbb{S}^k\to\Psi_\e(y)$ the composition
$\pi_y\circ\hat{v}$ is a map from $\mathbb{S}^k$ into
$\overline{B}_\varrho(g_0,\e,y)\cap\pi_y\big(\mathcal{K}(m,n,y)\big)$.
Since $P(y)$ is a $Z$-set in the convex set
$\overline{B}_\varrho(g_0,\e,y)$, by \cite[Proposition 6.3]{vu:98},
there exists a map
$\hat{v}_1\colon\mathbb{B}^{k+1}\to\overline{B}_\varrho(g_0,\e,y)\cap\pi_y\big(\mathcal{K}(m,n,y)\big)$
extending $\pi_y\circ\hat{v}$. Consider the map $v_2\colon A\to M$,
where $A=\big(\mathbb{B}^{k+1}\times
f^{-1}(y)\big)\cup\big(\mathbb{S}^k\times X\big)$, defined by
$v_2|\big(\mathbb{B}^{k+1}\times f^{-1}(y)\big)=v_1$ and
$v_2|\big(\mathbb{S}^k\times X\big)=v$. Next, take a selection
$u\colon\mathbb{B}^{k+1}\times X\to M$ for the set-valued map
$\phi\colon\mathbb{B}^{k+1}\times X\to M$, $\phi(z,x)=v_2(z,x)$ if
$(z,x)\in A$ and $\phi(z,x)=$cl$\big(B_\varrho(g_0(x),\e(x))\big)$
if $(z,x)\not\in A$. Such $u$ exists by Michael's \cite{mi}
convex-valued selection theorem. Obviously $u$ extends $v_2$ and
$\varrho\big(u(z,x),g_0(x)\big)\leq\e(x)$ for every
$(z,x)\in\mathbb{B}^{k+1}\times X$.  Finally, observe that $\hat{u}$
is the required extension of $\hat{v}$.
\end{proof}

We can finish the proof of Theorem 1.1.

\begin{pro}
Suppose $H\subset Y$ is a closed $C$-space and $M$ a closed convex
subset of a Banach space $E$. Then the sets $\mathcal{K}(m,n,H)$,
$m,n\geq 1$, are dense in $C(X,M)$ with respect to the source
limitation topology.
\end{pro}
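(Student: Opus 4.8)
The plan is to mirror the proof of Proposition 2.7, replacing the finite-dimensional selection theorem by its $C$-space counterpart and exploiting the key feature of Lemma 2.8 that, unlike Lemma 2.6, it fills spheres by balls \emph{without} enlarging $\e$. Because of this no telescoping family $\Phi_0\subset\dots\subset\Phi_k$ is needed: the single set-valued map $\Psi_\e$ already has the sphere-to-ball extension property in every dimension $k\ge 0$, which is precisely the $C^\infty$-type connectedness that a selection theorem over a $C$-space requires. Concretely, I would first record that $\Psi_\e$ enjoys the local property of Lemma 2.5, namely that whenever a compact $K\subset\Psi_\e(y_0)$ is given there is a neighbourhood $V(y_0)$ with $K\subset\Psi_\e(y)$ for all $y\in V(y_0)\cap H$. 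The proof of Lemma 2.5 applies essentially verbatim; the only change is that the closed-ball condition $\varrho(\cdot,g_0)\le\e$ is preserved under the relevant limits (compact-open convergence forces pointwise convergence), so it survives passage to limits even more comfortably than the open-ball condition did.

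With these two ingredients in hand --- the local (lower-semicontinuity-type) property above and the uniform sphere-to-ball extension of Lemma 2.8 --- I would invoke the $C$-space version of the selection theorem \cite{gu} used in Proposition 2.7 to produce a continuous selection $\theta\colon H\to C(X,M)$ of $\Psi_{\e/16}$, where $C(X,M)$ carries the compact open topology. Thus $\theta(y)\in\overline{B}_\varrho(g_0,\e/16)\cap\mathcal{K}(m,n,y)$ for every $y\in H$. I run the argument with $\e/16$ rather than $\e$ in order to create the slack needed below; any fixed fraction smaller than $1/8$ would serve equally well.

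The remainder is then identical to Proposition 2.7. I define $g\colon f^{-1}(H)\to M$ by $g(x)=\theta(f(x))(x)$; continuity follows from the compact open topology on $C(X,M)$ exactly as before, and $\theta(y)\in\overline{B}_\varrho(g_0,\e/16)$ gives $\varrho\big(g(x),g_0(x)\big)\le\e(x)/16<\e(x)/8$ on $f^{-1}(H)$. Since $M$ is a closed convex subset of a Banach space it is in particular a complete $ANR$, so Lemma 2.2 applies and extends $g$ to a map $\bar g\colon X\to M$ with $\varrho\big(\bar g(x),g_0(x)\big)<\e(x)$ for all $x\in X$, i.e.\ $\bar g\in B_\varrho(g_0,\e)$. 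Finally $g|f^{-1}(y)=\theta(y)|f^{-1}(y)$ together with $\theta(y)\in\mathcal{K}(m,n,y)$ yields $\bar g\in\mathcal{K}(m,n,H)$, whence $B_\varrho(g_0,\e)\cap\mathcal{K}(m,n,H)\neq\emptyset$ and density follows.

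I expect the genuine obstacle to be the invocation of the $C$-space selection theorem: one must verify that its hypotheses hold, in particular that the values $\Psi_\e(y)$ are nonempty, closed and complete subsets of $C(X,M)$ and that they are $C^n$-connected for \emph{all} $n$ simultaneously. Nonemptiness follows because $M$ is a Krasinkiewicz space; closedness and completeness follow from the fact that the pointwise condition defining $\overline{B}_\varrho(g_0,\e)$ is closed under compact-open limits while $M$ is complete; and the uniform connectedness is exactly the content of Lemma 2.8, which supplies the asphericity in every dimension at once --- this is what replaces the single dimension bound $\dim H\le k$ used in the finite-dimensional case. Pinning down the completeness/metrizability framework for $C(X,M)$ so that the $C$-space theorem genuinely applies, rather than merely its finite-dimensional predecessor, is the technical heart of the argument; once that is secured the rest is a verbatim repetition of Proposition 2.7.
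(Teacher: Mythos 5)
Your proposal follows the paper's route almost step for step: the local property of $\Psi_\e$ inherited from Lemma 2.5, the asphericity of the values supplied by Lemma 2.8, a selection theorem for $C$-spaces producing $\theta\colon H\to C(X,M)$, and the map $g(x)=\theta(f(x))(x)$ on $f^{-1}(H)$. Two remarks. First, the selection theorem you need here is not Gutev's finite-dimensional theorem from \cite{gu} but Uspenskij's selection theorem for $C$-spaces \cite[Theorem 1.3]{vu:98}; this is only a citation slip, since your stated hypotheses (nonempty aspherical values plus the Lemma 2.5-type local property) are exactly what that theorem requires.

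Second, and more substantively, your final extension step is the one place where the argument has a real (though fixable) gap. You extend $g$ from $f^{-1}(H)$ to $X$ by invoking Lemma 2.2 on the grounds that $M$ is a complete $ANR$. But Lemma 2.2 only asserts the existence of \emph{some} complete metric on $M$ with the stated extension property; it does not assert it for the norm metric inherited from $E$, and the entire Subsection 2.3 argument --- in particular the convex-combination trick $(1-\lambda)w+\lambda g_0$ in the proof of Lemma 2.8 and the definition of $\overline{B}_\varrho(g_0,\e)$ --- is pinned to the norm metric. So you cannot simply cite Lemma 2.2 at this point without either verifying that the norm metric has the Lemma 2.2 property or reconciling the two metrics. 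The paper avoids the issue entirely: it extends $g$ by applying Michael's convex-valued selection theorem to the lower semicontinuous map $\vartheta(x)=\{g(x)\}$ for $x\in f^{-1}(H)$ and $\vartheta(x)=\overline{B}_\varrho\big(g_0(x),\e(x)\big)$ otherwise, which works directly with the norm metric (the values are closed and convex because $M$ is convex) and gives $\varrho\big(\bar g(x),g_0(x)\big)\leq\e(x)$ with no loss of a multiplicative constant --- which also makes your $\e/16$ slack unnecessary. With that substitution your argument coincides with the paper's proof.
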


\begin{proof} Consider the set-valued map $\Psi_\e\colon H\to C(X,M)$. It
follows from the proof of Lemma 2.5 that if $K\subset\Psi_\e(y_0)$
for some compactum $K$ and $y_0\in H$, then $y_0$ admits a
neighborhood $V\subset H$ with $K\subset\Psi_\e(y)$ for all $y\in
V$. Moreover, according to Lemma 2.8, every image $\Psi_\e(y)$ is
aspherical, i.e., any map from $\mathbb{S}^k$ into $\Psi_\e(y)$,
$k\geq 0$, can be extended to a map from $\mathbb{B}^{k+1}$ to
$\Psi_\e(y)$. Then, by the Uspenskij selection theorem \cite[Theorem
1.3]{vu:98}, $\Psi_\e$ admits a continuous selection $\theta\colon
H\to C(X,M)$. Repeating the arguments from the proof of Proposition
2.7, we obtain a map $g\colon f^{-1}(H)\to M$ such that
$\varrho\big(g(x),g_0(x)\big)\leq\e(x)$ for every $x\in f^{-1}(H)$
and $g|f^{-1}(y)=\theta(y)|f^{-1}(y)$, $y\in H$. Applying once more
the Michael \cite{mi} convex-valued selection theorem  for the
set-valued map $\vartheta\colon X\to M$, $\vartheta(x)=g(x)$ if
$x\in f^{-1}(H)$ and
$\vartheta(x)=\overline{B}_\varrho\big(g_0(x),\e(x)\big)$ if
$x\not\in f^{-1}(H)$, we obtain a selection $\bar{g}$ for
$\vartheta$. Obviously, $\bar{g}$ extends $g$ and
$\bar{g}\in\overline{B}_\varrho(g_0,\e)$. Since
$\theta(y)\in\mathcal{K}(m,n,y)$ for all $y\in H$, we have
$\bar{g}\in\overline{B}_\varrho(g_0,\e)\cap\mathcal{K}(m,n,H)$.
Hence, $\mathcal{K}(m,n,H)$ is dense in $C(X,M)$.
\end{proof}

\section{Some properties of Krasinkiewicz spaces}

In this section we investigate the class of Krasinkiewicz spaces
and, on that base, provide more spaces from this class. Let us start
with the following proposition whose proof is straightforward.

\begin{pro} For every  space $M$ we have:
\begin{enumerate}
\item If $M$ is a Krasinkiewicz space, then so is any open subset of $M$;
\item If every compact set in $M$ is contained in a Krasinkiewicz subset of $M$,
then $M$ is also a Krasinkiewicz space.
\end{enumerate}
\end{pro}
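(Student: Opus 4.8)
The plan rests on a single structural observation that makes both parts almost formal: whether a map $g\colon X\to M$ (with $X$ compact) is Krasinkiewicz depends \emph{only} on the decomposition of $X$ into the fibers $g^{-1}(g(x))$ and their components, together with the continua of $X$. None of these data change if we enlarge or shrink the codomain, as long as the image $g(X)$ is left fixed. So the first thing I would record is the following codomain-invariance principle: if $g(X)\subset N\subset M$, then $g$ is a Krasinkiewicz map as a map into $M$ if and only if it is a Krasinkiewicz map when regarded as a map into $N$. I would also recall at the outset that since $X$ is compact, the source limitation topology on $C(X,M)$ coincides with the uniform convergence topology, so establishing that a set of maps is dense amounts to producing, for each $f$ and each $\e>0$, a map of the required kind that is uniformly $\e$-close to $f$.

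For part (1) I would fix an open set $U\subset M$, a compactum $X$, a map $f\in C(X,U)$, and $\e>0$. The key geometric input is that $f(X)$ is a compact subset of the open set $U$, so $\delta=\dist\big(f(X),M\setminus U\big)$ is strictly positive (taking, say, $\delta=1$ if $U=M$). Since $M$ is a Krasinkiewicz space, I can choose a Krasinkiewicz map $g\colon X\to M$ with $\varrho(f,g)<\min\{\e,\delta\}$. Then $g(X)$ lies in the $\delta$-neighborhood of $f(X)$, whence $g(X)\subset U$, so $g\in C(X,U)$. By the invariance principle, $g$ is a Krasinkiewicz map into $U$, and it is $\e$-close to $f$; this shows Krasinkiewicz maps are dense in $C(X,U)$, i.e. $U$ is a Krasinkiewicz space.

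For part (2) I would fix a compactum $X$, a map $f\in C(X,M)$, and $\e>0$. Because $f(X)$ is compact, the hypothesis supplies a Krasinkiewicz subset $N\subset M$ with $f(X)\subset N$. Viewing $f$ as a map into $N$ and using that $N$ (with the metric inherited from $\varrho$) is a Krasinkiewicz space, I obtain a Krasinkiewicz map $g\colon X\to N$ with $\varrho(f,g)<\e$. Applying the invariance principle once more, $g$ regarded as a map into $M$ is Krasinkiewicz and $\e$-close to $f$, so $M$ is a Krasinkiewicz space.

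The only point demanding genuine care, and the one I would expect to be the ``hard'' (though shallow) step, is the justification of the codomain-invariance principle: one must check that the fibers $g^{-1}(g(x))$, their components, and the relation ``$L$ is contained in a fiber or contains a component of a fiber'' are genuinely intrinsic to the partition of $X$ and independent of the ambient target. Everything else is the routine distance estimate in part (1) and the direct reinterpretation of the approximating map in part (2); no deeper machinery (such as Lemma 2.2 or the selection theorems used in Section 2) is needed here.
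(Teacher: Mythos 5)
Your proof is correct, and since the paper omits the argument entirely (calling it ``straightforward''), your write-up simply supplies the routine details the authors had in mind: the codomain-invariance of the Krasinkiewicz property for maps, the positive distance from the compact image $f(X)$ to $M\setminus U$ in part (1), and the direct reinterpretation through a Krasinkiewicz subset containing $f(X)$ in part (2). No discrepancy with the paper's approach.
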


\begin{cor}
Every polyhedron is a Krasinkiewicz space.
\end{cor}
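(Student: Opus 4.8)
The plan is to deduce the general statement from the already known \emph{compact} case: by \cite{m} every compact polyhedron is a Krasinkiewicz space, so I want to confine the relevant compacta to finite subcomplexes. In the spirit of Proposition 3.1(2), fix a polyhedron $P=|K|$, a compactum $X$, a map $h\colon X\to P$ and (since $X$ is compact, the source limitation topology on $C(X,P)$ is the uniform one) a number $\e>0$; I must produce a Krasinkiewicz map $\e$-close to $h$. The tempting shortcut is to place the compact set $h(X)$ inside a compact subpolyhedron and invoke \cite{m} directly. This is exactly where I expect the main obstacle to lie: in the \emph{metric} topology a compactum may meet infinitely many simplices (for instance a sequence converging to an interior point of a common face while running through pairwise distinct top-dimensional simplices), and such a set is contained in no compact subpolyhedron. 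Hence containment must be replaced by approximation.

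The key step is therefore the following lemma: for every $\delta>0$ there are a \emph{finite} subcomplex $L\subset K$ and a map $h'\colon X\to|L|$ with $\varrho\big(h'(x),h(x)\big)<\delta$ for all $x\in X$. To prove it I use the isometric barycentric embedding $|K|\hookrightarrow\ell^2(V)$, $p\mapsto\big(t_v(p)\big)_{v\in V}$, where $V$ is the vertex set and $t_v$ the barycentric coordinates. Since $h(X)$ is compact it can be covered by finitely many small balls centred at points $c_1,\dots,c_N\in h(X)$; letting $V_0$ be the (finite) union of the supports of the $c_i$ and choosing the radius small enough, a short estimate shows that the $\ell^1$-tail $\sum_{v\notin V_0}t_v(p)$ is $<\delta$ for every $p\in h(X)$, so $s(p):=\sum_{v\in V_0}t_v(p)>1-\delta$. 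Define $\pi$ on $h(X)$ by truncating all coordinates outside $V_0$ and renormalizing, $t_v(\pi(p))=t_v(p)/s(p)$ for $v\in V_0$ and $0$ otherwise. The nonzero coordinates of $\pi(p)$ sit on a subset of the vertices of the carrier simplex of $p$, hence span a face of it, so $\pi(p)\in|L|$ where $L$ is the finite subcomplex spanned by $V_0$; moreover $\pi$ is continuous because $s$ is bounded away from $0$, and $\|\pi(p)-p\|_2\le\sqrt2\,\delta$. Then $h'=\pi\circ h$ is the required map.

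With the lemma established the argument closes quickly. Apply it with $\delta=\e/2$ to get a finite, hence compact, subpolyhedron $|L|$ and a map $h'\colon X\to|L|$ with $\varrho(h',h)<\e/2$. By \cite{m}, $|L|$ is a Krasinkiewicz space, so there is a Krasinkiewicz map $g\colon X\to|L|$ with $\varrho(g,h')<\e/2$, whence $\varrho(g,h)<\e$. Finally, regarding $g$ as a map into $P$ changes none of its nonempty fibers (the fibers over $P\setminus|L|$ are empty), so a continuum of $X$ satisfies the Krasinkiewicz alternative for $g\colon X\to P$ precisely when it does for $g\colon X\to|L|$; thus $g$ is a Krasinkiewicz map into $P$ lying $\e$-close to $h$. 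As $X$, $h$ and $\e$ were arbitrary, $C(X,P)$ contains Krasinkiewicz maps densely, i.e. $P$ is a Krasinkiewicz space. The only substantial point is the approximation lemma, which is precisely what forces us to work with the metric rather than the weak topology.
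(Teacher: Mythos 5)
Your proof is correct, but it does not follow the paper's route. The paper disposes of the corollary in one line: it invokes Proposition 3.1(2) together with the compact case from \cite{m}, i.e., it rests on the claim that every compact subset of a polyhedron $P$ is contained in a compact subpolyhedron (or at least in some Krasinkiewicz subset) of $P$. As you rightly point out, for the metric topology that containment claim fails in general: in the infinite hedgehog $\bigcup_{n}[v,w_n]$ the compact set $\{v\}\cup\{x_n\}$, where $x_n\to v$ runs through the interiors of distinct edges, lies in no compact subpolyhedron. So the paper's two-line argument is immediate only for locally finite complexes (where the metric and weak topologies coincide and compacta do lie in finite subcomplexes), whereas your version --- replacing containment by approximation via the truncate-and-renormalize projection onto the finite full subcomplex spanned by the supports of finitely many centres, then approximating inside that compact subpolyhedron and observing that a Krasinkiewicz map into a subspace remains Krasinkiewicz into the ambient space --- handles arbitrary simplicial complexes and in effect supplies the justification the paper's appeal to Proposition 3.1(2) leaves implicit. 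One small point to tidy: in the covering step the radius must be chosen per centre, not uniformly in advance, since controlling each coordinate by $r$ only bounds the tail $\sum_{v\notin \mathrm{supp}(c_i)}t_v(p)$ by $r\cdot|\mathrm{supp}(c_i)|$; the clean fix is to take around each $c\in h(X)$ a ball on which the continuous function $p\mapsto\sum_{v\in \mathrm{supp}(c)}t_v(p)$ exceeds $1-\delta$ and extract a finite subcover. This is routine and does not affect the correctness of your argument.
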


\begin{proof}
Apply Proposition 3.1(2) and the fact that each compact polyhedron
is a Krasinkiewicz space \cite{m}.
\end{proof}

Next proposition is an analogue of \cite[Theorem 4.2]{st}.

\begin{pro} Suppose $M$ is completely metrizable and for every
$\varepsilon>0$ there exist a Krasinkiewicz space $Z_\varepsilon$
and maps $r\colon M\to Z_\varepsilon$ and $\phi\colon
Z_\varepsilon\to M$ such that $\phi$ is light and $\phi\circ r$ is
$\varepsilon$-close to the identity on $M$. Then $M$ is a
Krasinkiewicz space.
\end{pro}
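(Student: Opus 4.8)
The plan is to lift an arbitrary map into $M$ to a Krasinkiewicz map into the approximating space $Z_\varepsilon$ and then push it back down by $\phi$, exploiting that post-composing a Krasinkiewicz map with a \emph{light} map again produces a Krasinkiewicz map. Fix a compactum $X$; since $X$ is compact, the source limitation topology on $C(X,M)$ is the topology of uniform convergence, so it is enough to prove that for every $f_0\in C(X,M)$ and every $\eta>0$ there is a Krasinkiewicz map $g\colon X\to M$ with $\varrho(g,f_0)<\eta$.

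The key step I would isolate first is the following: for any space $Z$, if $h\colon X\to Z$ is a Krasinkiewicz map and $\phi\colon Z\to M$ is light, then $g=\phi\circ h$ is a Krasinkiewicz map. To verify this, let $L\subset X$ be a continuum. Since $h$ is Krasinkiewicz, either $L\subset h^{-1}(z)$ for some $z$, in which case $g(L)=\{\phi(z)\}$ and $L$ lies in a fiber of $g$; or $L$ contains a component $K$ of some fiber $h^{-1}(z)$. In the second case set $F=\phi^{-1}(\phi(z))$, so that $g^{-1}(\phi(z))=h^{-1}(F)\supset K$. I claim $K$ is a component of $h^{-1}(F)$: if $C$ is the component of $h^{-1}(F)$ containing $K$, then $h(C)$ is a connected subset of the totally disconnected set $F$, hence a singleton, which must be $z$; thus $C\subset h^{-1}(z)$ is connected and contains the component $K$ of $h^{-1}(z)$, forcing $C=K$. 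Hence $L\supset K$ contains a component of the fiber $g^{-1}(\phi(z))$, and $g$ is Krasinkiewicz.

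To finish, given $f_0$ and $\eta$, I would apply the hypothesis with $\varepsilon=\eta/2$ to obtain $Z_\varepsilon$, $r\colon M\to Z_\varepsilon$ and a light $\phi\colon Z_\varepsilon\to M$ with $\varrho(\phi(r(p)),p)<\eta/2$ for all $p\in M$. Fix a metric $d_\varepsilon$ on $Z_\varepsilon$. The set $A=r(f_0(X))$ is compact, so a routine finite-subcover argument yields $\delta>0$ such that $a\in A$ and $d_\varepsilon(z,a)<\delta$ imply $\varrho(\phi(z),\phi(a))<\eta/2$. As $Z_\varepsilon$ is a Krasinkiewicz space, I approximate $r\circ f_0\colon X\to Z_\varepsilon$ uniformly within $\delta$ by a Krasinkiewicz map $h\colon X\to Z_\varepsilon$, and set $g=\phi\circ h$. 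The claim shows $g$ is Krasinkiewicz, while for every $x\in X$, taking $a=r(f_0(x))\in A$, $\varrho(g(x),f_0(x))\le\varrho(\phi(h(x)),\phi(r(f_0(x))))+\varrho(\phi(r(f_0(x))),f_0(x))<\eta/2+\eta/2=\eta$. Thus Krasinkiewicz maps are dense in $C(X,M)$, i.e. $M$ is a Krasinkiewicz space.

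The hard part is the composition claim, and within it the verification that a component of a fiber of $h$ persists as a component of a fiber of $\phi\circ h$; this is precisely where the lightness (total disconnectedness of the fibers) of $\phi$ is indispensable. The only other delicate point is converting the $d_\varepsilon$-approximation of $h$ into a $\varrho$-approximation of $\phi\circ h$, which is not automatic since $Z_\varepsilon$ need not be compact, but is handled by uniform continuity of $\phi$ on the compactum $A=r(f_0(X))$. I note in passing that completeness of $M$ does not appear to be needed for this particular argument.
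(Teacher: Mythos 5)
Your proof is correct and follows essentially the same route as the paper: approximate $r\circ f_0$ by a Krasinkiewicz map into $Z_\varepsilon$, using uniform continuity of $\phi$ near the compact set $r(f_0(X))$ to control the error, and push down by the light map $\phi$. The only difference is that the paper cites the fact that composing a Krasinkiewicz map with a light map yields a Krasinkiewicz map (Proposition 3.1 of Matsuhashi's paper), whereas you prove it directly --- and your verification, including the point that a component of $h^{-1}(z)$ remains a component of $h^{-1}(\phi^{-1}(\phi(z)))$ by total disconnectedness of the fibers of $\phi$, is sound.
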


\begin{proof} Let $g\in C(X,M)$ and $\varepsilon>0$, where $X$ is
compact. Then there exists a Krasinkiewicz space $Z_{\varepsilon/2}$
and two maps $r\colon M\to Z_{\varepsilon/2}$, $\phi\colon
Z_{\varepsilon}\to M$ such that $\phi$ is light and $\phi\circ r$ is
$\varepsilon/2$-close to the identity on $M$. Take $\delta>0$ and a
neighborhood $U$ of $r(g(X))$ in $Z_{\varepsilon/2}$ such that
$dist(\phi(z_1),\phi(z_2))<\varepsilon/2$ provided $z_1, z_2\in U$
and $dist(z_1,z_2)<\delta$. Next, choose a Krasinkiewicz map
$h\colon X\to Z_{\varepsilon/2}$ which is $\delta$-close to $r\circ
g$ and $h(X)\subset U$. Finally, $g'=\phi\circ h$ is
$\varepsilon$-close to $g$ and, since $\phi$ is light, $g'$ is a
Krasinkiewicz map (see \cite[Proposition 3.1]{m}).
\end{proof}

Proposition 3.3 is of special interest when all $Z_\varepsilon$ are
subsets of $M$ and the maps $r$ are retractions (in such a case we
say that $M$ {\em admits small retractions to Krasinkiewicz
spaces}). Since every compact Menger manifold (a manifold modeled on
the Menger cube $\mu^n$ for some $n\geq 1$), as well as every
$1$-dimensional Peano continuum, admits small retractions to compact
polyhedra, it was observed in \cite[Theorem 3.2-3.3]{m} that any
such a space is Krasinkiewicz. Moreover, every N\"obeling manifold
also admits small retractions to polyhedra, see \cite{ckt}. So, by
Proposition 3.3, we have:
\begin{cor}
Each of the following are Krasinkiewicz spaces: $1$-dimensional
Peano continua, Menger manifolds and N\"obeling manifolds.
\end{cor}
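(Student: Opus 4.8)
The plan is to verify the hypotheses of Proposition 3.3 for each of the three classes of spaces, taking the $Z_\varepsilon$ to be subpolyhedra, the maps $r$ to be retractions, and the light maps $\phi$ to be inclusions; in other words, to exhibit that each such $M$ admits small retractions to Krasinkiewicz spaces in the sense introduced just above the corollary.

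First I would record that each space $M$ on the list is completely metrizable, so that Proposition 3.3 is applicable: a $1$-dimensional Peano continuum is a compact metric space, and Menger and N\"obeling manifolds are locally modeled on completely metrizable models and hence are themselves completely metrizable. Next comes the essential input, namely that each such $M$ admits small retractions to polyhedra: for every $\varepsilon>0$ there is a (sub)polyhedron $Z_\varepsilon\subset M$ together with a retraction $r\colon M\to Z_\varepsilon$ that is $\varepsilon$-close to the identity $\id_M$. For $1$-dimensional Peano continua and Menger manifolds this is supplied by \cite[Theorem 3.2-3.3]{m}, while for N\"obeling manifolds it is supplied by \cite{ckt}.

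With these retractions in hand the remainder is formal and uniform across the three cases. Take $\phi\colon Z_\varepsilon\hookrightarrow M$ to be the inclusion. Since $\phi$ is injective, every fiber $\phi^{-1}(z)$ is a single point and in particular totally disconnected, so $\phi$ is light; and $\phi\circ r=r$ is $\varepsilon$-close to $\id_M$ by the choice of $r$. Finally, each $Z_\varepsilon$, being a polyhedron, is a Krasinkiewicz space by Corollary 3.2. Thus all the hypotheses of Proposition 3.3 are met, and I would conclude that $M$ is a Krasinkiewicz space.

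The only genuinely nontrivial step is the existence of the small retractions onto polyhedra, and this is precisely the content deferred to \cite{m} and \cite{ckt}: it rests on the characterization and $Z$-set approximation/unknotting machinery for Menger and N\"obeling manifolds, while the Peano-continuum case reduces to the classical fact that a $1$-dimensional Peano continuum can be $\varepsilon$-retracted onto a finite graph. Once these facts are granted, the application of Proposition 3.3 together with Corollary 3.2 is immediate, so I expect no further obstacle in the assembly of the argument.
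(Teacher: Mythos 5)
Your proposal is correct and follows exactly the route the paper takes: it invokes Proposition 3.3 with $Z_\varepsilon$ a subpolyhedron, $r$ a small retraction (citing \cite{m} for Peano continua and Menger manifolds and \cite{ckt} for N\"obeling manifolds), $\phi$ the (light) inclusion, and Corollary 3.2 to certify that polyhedra are Krasinkiewicz. No substantive difference from the paper's argument.
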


\begin{pro}
A product of finitely many Krasinkiewicz spaces is a Krasinkiewicz
space.
\end{pro}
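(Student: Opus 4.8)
The plan is to reduce to two factors and then isolate the purely combinatorial heart of the matter: that the combination of two Krasinkiewicz maps with a common compact domain is again Krasinkiewicz. By the obvious induction, writing $M_1\times\cdots\times M_k=M_1\times(M_2\times\cdots\times M_k)$ and applying the inductive hypothesis to the second factor, it suffices to prove that $M_1\times M_2$ is a Krasinkiewicz space whenever $M_1$ and $M_2$ are. So I fix a compactum $X$, a map $g=(g_1,g_2)\colon X\to M_1\times M_2$ and $\varepsilon>0$, and seek a Krasinkiewicz map $\varepsilon$-close to $g$ with respect to the max-metric on the product.

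First I would approximate each coordinate \emph{separately}. Since $M_1$ and $M_2$ are Krasinkiewicz spaces, there are Krasinkiewicz maps $h_1\colon X\to M_1$ and $h_2\colon X\to M_2$ with $h_i$ close to $g_i$ (with respect to fixed compatible metrics on $M_1$ and $M_2$); then $h=(h_1,h_2)$ is $\varepsilon$-close to $g$. The entire proof then rests on the claim that if $h_1$ and $h_2$ are Krasinkiewicz maps on the compactum $X$, then $h=(h_1,h_2)$ is a Krasinkiewicz map. Here the fibers of $h$ are exactly the intersections $h^{-1}(a,b)=h_1^{-1}(a)\cap h_2^{-1}(b)$.

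To prove the claim I take a subcontinuum $L\subset X$ with $h(L)$ not a single point and, by symmetry, assume $h_1(L)$ is not a single point. Since $h_1$ is Krasinkiewicz, $L$ contains a component $C$ of some fiber $h_1^{-1}(a)$; being a component of the compactum $h_1^{-1}(a)$, $C$ is a subcontinuum of $X$. I then apply the Krasinkiewicz property of $h_2$ to $C$: either $h_2(C)=\{b\}$ is a point, in which case $C\subset h_1^{-1}(a)\cap h_2^{-1}(b)$, or $C$ contains a component $D$ of some fiber $h_2^{-1}(b)$, in which case $D\subset C\subset h_1^{-1}(a)$ gives $D\subset h_1^{-1}(a)\cap h_2^{-1}(b)$.

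The step I expect to require the most care is the nesting of components: a component of a fiber of $h_1$ that lies in a fiber of $h_2$ (respectively a component of a fiber of $h_2$ lying in a fiber of $h_1$) is automatically a component of the corresponding fiber $h^{-1}(a,b)$ of $h$. Indeed, any connected subset of $h^{-1}(a,b)$ containing $C$ is a connected subset of $h_1^{-1}(a)$ containing the component $C$, hence equals $C$; the same argument with $h_2^{-1}(b)$ handles $D$. Thus in either case $L$ contains a component ($C$ or $D$) of a fiber of $h$, so $h$ is Krasinkiewicz. This establishes the claim, and hence that $M_1\times M_2$, and by induction any finite product, is a Krasinkiewicz space. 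The noteworthy feature of this argument is that it needs neither Theorem 1.1 nor any dimension-theoretic hypothesis on the factors: approximating the two coordinates independently already suffices, precisely because of the component-nesting observation.
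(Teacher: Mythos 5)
Your proof follows exactly the paper's route: reduce by induction to two factors and then show that the diagonal product $h=(h_1,h_2)$ of two Krasinkiewicz maps on a compactum is Krasinkiewicz, a step the paper dismisses with ``and that easily follows'' and which you correctly fill in via the component-nesting observation. The argument is correct as written.
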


\begin{proof}
We need to prove the proposition for a product of two Krasinkie-wicz
spaces $M_1$ and $M_2$. In this case, the proof is reduced to show
that if $X$ is a metric compactum and $g_i\colon X\to M_i$, $i=1,2$,
are Krasinkiewicz maps, then the product map $g=g_1\triangle
g_2\colon X\to M_1\times M_2$ is also a Krasinkiewicz map. And that
easily follows.
\end{proof}

Some more examples of Krasinkiewicz spaces are provided by next
theorem.

\begin{thm}\label{thm:local}
A complete $ANR$-space $M$ is a Krasinkiewicz space if and only if
it has an open cover of Krasinkiewicz spaces.
\end{thm}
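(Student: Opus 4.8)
**The plan is to prove both directions, with the forward direction being essentially trivial and the reverse direction constituting the real content.**

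The forward implication is immediate: if $M$ is a Krasinkiewicz space, then $\{M\}$ itself is an open cover of $M$ by Krasinkiewicz spaces, so there is nothing to prove. Thus I would spend a single sentence on it and devote the argument to the converse. Suppose then that $M$ is a complete $ANR$-space admitting an open cover $\U = \{U_\alpha\}$ by Krasinkiewicz subspaces. In view of Proposition 3.3, it suffices to produce, for each $\varepsilon > 0$, a Krasinkiewicz space $Z_\varepsilon$ together with maps $r\colon M \to Z_\varepsilon$ and $\phi\colon Z_\varepsilon \to M$ with $\phi$ light and $\phi \circ r$ being $\varepsilon$-close to $\id_M$. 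Equivalently, by Proposition 3.1(2), it suffices to show that every compact subset $K \subset M$ is contained in some Krasinkiewicz subset of $M$; this reformulation is the cleaner route, since openness of the members of $\U$ lets us pass freely to Krasinkiewicz subsets via Proposition 3.1(1).

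The heart of the proof is a \emph{nerve-and-partition-of-unity} construction. Fix a compactum $K \subset M$. Using that $M$ is an $ANR$ (hence locally contractible and locally finite-dimensional near $K$), I would refine the cover $\U$ near $K$ to a finite open cover $\{V_1,\dots,V_p\}$ of $K$, each $V_i$ contained in some member of $\U$ and chosen small enough that its closure sits inside a Krasinkiewicz open set and that a partition-of-unity map into the nerve admits a nearby section back into $M$. Let $N$ be the nerve of $\{V_i\}$ and let $\kappa\colon K \to |N|$ be a canonical (barycentric) map. Since the polyhedron $|N|$ is a Krasinkiewicz space by Corollary 3.2, and since the $ANR$ property of $M$ supplies a map $\psi\colon |N| \to M$ realizing the nerve back in $M$ with $\psi \circ \kappa$ close to the inclusion $K \hookrightarrow M$, I obtain a factorization of a neighborhood of $K$ through a polyhedron. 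The key local feature is that each closed star $\overline{\St}(v_i)$ in $|N|$ maps under $\psi$ into a single Krasinkiewicz member of $\U$, so that $\psi$ restricted over each simplex lands in a Krasinkiewicz subspace. I would then invoke Proposition 3.3 directly: the composite gives maps $r = \kappa$ and $\phi = \psi$ approximating the identity, and after an $\varepsilon$-perturbation making $\phi$ light (polyhedra admit light $\varepsilon$-maps onto their images since $|N|$ is finite-dimensional), the hypotheses of Proposition 3.3 are met.

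The main obstacle is the light-map and locality bookkeeping, not the homotopy-theoretic skeleton. Specifically, I expect the delicate step to be arranging that the realization map $\phi = \psi\colon |N| \to M$ is simultaneously \emph{light} and \emph{locally valued in Krasinkiewicz sets}, since lightness is a global condition on fibers while the Krasinkiewicz property is certified only chart-by-chart through the open cover $\U$. Controlling the mesh of the nerve so that each simplex image stays within one $U_\alpha$, while keeping $\phi \circ r$ within $\varepsilon$ of the identity and preserving lightness, requires choosing the refinement $\{V_i\}$ and the perturbation parameters in the correct order. Once $Z_\varepsilon = |N|$ is shown to be a Krasinkiewicz space (Corollary 3.2) with the two required maps, Proposition 3.3 finishes the argument, completing the equivalence.
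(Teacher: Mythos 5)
Your forward direction and the overall ``localize via the open cover'' instinct are sound, but the reverse direction as you set it up has a genuine gap: the whole argument funnels through Proposition 3.3, and that proposition is only usable if the map $\phi\colon Z_\varepsilon\to M$ is \emph{light}. Your $Z_\varepsilon$ is the nerve $|N|$ and $\phi=\psi$ is a partition-of-unity realization of the nerve back in $M$; you assert that $\psi$ can be perturbed to a light map ``since $|N|$ is finite-dimensional.'' That claim is unjustified and in general false: a light map cannot lower dimension (Hurewicz's dimension-lowering theorem), and, more to the point, approximability of an arbitrary map $|N|\to M$ by light maps is a strong general-position property of the target $M$ (essentially the ``free space'' property studied by Song and Tymchatyn), not something every complete $ANR$ enjoys. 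In all the instances where the paper actually applies Proposition 3.3 (Corollary 3.4), $\phi$ is an inclusion of a subset, where lightness is automatic; nothing of the sort is available for a nerve realization. There are also secondary mismatches: Proposition 3.3 needs $r$ defined on all of $M$ with $\phi\circ r$ being $\varepsilon$-close to $\mathrm{id}_M$ globally, whereas your $\kappa$ lives only on (a neighborhood of) a compactum, and Proposition 3.1(2) requires a Krasinkiewicz \emph{subset of} $M$, which $|N|$ is not. Those are patchable; the lightness step is not.

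The paper avoids factorization through a polyhedron entirely. It fixes a compactum $X$, chooses $\varepsilon_y$ with $B(y,3\varepsilon_y)\subset U_y$, and introduces the set $H_y\subset C(X,M)$ of maps $g$ satisfying the Krasinkiewicz condition only for subcontinua $L$ with $g(L)\subset \mathrm{cl}(B(y,\varepsilon_y))$. Each $H_y$ is shown to be a dense $G_\delta$: density uses that $U_y$ is a Krasinkiewicz space together with the $ANR$ extension property, and openness of the approximating sets $H_{m,n,y}$ is a limit-of-continua argument. Given $f$ and $\varepsilon$, finitely many balls $B(y_i,\varepsilon_{y_i}/2)$ cover $f(X)$, the Baire property gives $g_0\in\bigcap_{i}H_{y_i}$ close to $f$, and the local-to-global step is the elementary observation that every continuum $T$ with $\mathrm{diam}\,g_0(T)>0$ contains a subcontinuum $T'$ with $\mathrm{diam}\,g_0(T')>0$ whose image lies in one closed ball. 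To rescue your outline you would need either this localization argument or a proof that your specific $\psi$ admits light approximations --- and the latter is exactly the hard, and in general unavailable, ingredient.
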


Proof. It suffices to show that $M$ is Krasinkiewicz if each $y \in
M$ has a neighborhood $U_y$ in $M$ which is a Krasinkiewicz space.
We fix a compactum $X$ and choose $\varepsilon_y >0$, $y\in M$, with
$B(y,3 \varepsilon_y) \subset U_y$. Let $H_y$ be the set of all maps
$g:X \to M$ satisfying next condition:

\begin{enumerate}
\item[$(a)$] If  $L\subset X$ is a subcontinuum such that diam $g(L)>0$ and
$g(L)\subset$ cl$(B(y, \varepsilon_y))$, then there exists $x \in L$
with $C(x,g) \subset L$.
\end{enumerate}

Now, for each $m,n \in \mathbb{N}$ consider the set
$H_{m,n,y}\subset C(X,M)$ of all maps $g$ such that:

\begin{enumerate}
\item[$(b)$] If $L\subset X$ is continuum with diam $g(L) \ge 1/n$ and
$g(L) \subset$ cl$(B(y, \varepsilon_y))$, then $C(x,g) \subset
B(L,1/m)$ for some $x \in L$.
\end{enumerate}

\textit{ Claim $1$. $H_y= \bigcap_{m,n \in \mathbb{N}} H_{m,n,y}$}.

The proof of this claim is similar to the proof of Proposition 2.1,
so it is omitted.

\medskip
\textit{Claim $2$. Every $H_{m,n,y}$ is open in $C(X,M)$}.

Let $f\in {\rm cl} (C(X,M) \setminus H_{m,n,y})$. Then there exists
a sequence of maps $\{f_i\}_{i=1}^{\infty} \subset C(X,M) \setminus
H_{m,n,y}$ with lim$f_i =f$. For each $i=1,2,\ldots,$ there exists a
subcontinuum $L_i \subset X$ such that diam$f_i(L_i) \ge 1/n$,
$f_i(L_i) \subset {\rm cl}(B(y, \varepsilon_y))$ and $C(x,f_i)$ is
not contained in $B(L_i,1/m)$ for each $x \in L_i$. We may assume
that $L_i$  converges to  a subcontinuum $L \subset X$. It is easy
to see that diam$f(L) \ge 1/n$ and $f(L) \subset {\rm cl}(B(y,
\varepsilon_y))$. Let $x \in L$ be arbitrary. Then $x$ is the limit
of a sequence $\{x_i\}_{i=1}^{\infty} \subset X$ such that $x_i \in
L_i$ for each $i =1,2, \ldots$. We may assume that $C(x_i,f_i)$
converges to a subcontinuum $C \subset X$. Since each $C(x_i, f_i)$
is not contained in $B(L_i,1/m)$, $C$ is not contained in
$B(L,1/m)$. Moreover, $x \in C \subset C(x,f)$. So, $f \in C(X,M)
\setminus H_{m,n,y}$. This completes the proof of Claim 2.

\medskip
\textit{Claim $3$. Every $H_y$ is dense in $C(X,M)$}.

Let $f \in C(X,M)$ and $\varepsilon>0$ with $\varepsilon <
\varepsilon_y$. Since $M$ is an $ANR$, there is a $\delta>0$ such
that each map $g:A \to M$, where $A\subset X$ is closed, has a
continuous extension $\hat{g}:X \to M$ which is $\varepsilon$-close
to $f$ provided $g$ is $\delta$-close to $f|A$. Since $U_y$ is a
Krasinkiewicz space and $f^{-1}({\rm cl}(B(y,2 \varepsilon_y))$ is
compact, there exists a Krasinkiewicz map $k:f^{-1}({\rm cl}(B(y,2
\varepsilon_y)) \to U_y$ such that $k$ is $\delta$-close to
$f|f^{-1}({\rm cl}(B(y,2 \varepsilon_y)))$. Then there exists a
continuous extension $\hat{k}:X \to Y$ of $k$ such that $\hat{k}$ is
$\varepsilon$-close to $f$. We are going to show that $\hat{k}\in
H_y$. Indeed, let $L$ be a subcontinuum of $X$ such that
diam$\hat{k}(L)
> 0$ and $\hat{k}(L) \subset {\rm cl}(B(y, \varepsilon_y))$. Then $L
\subset f^{-1}({\rm cl}(B(y,2 \varepsilon_y)))$. Since
$k:f^{-1}({\rm cl}(B(y,2 \varepsilon_y)) \to U_y$ is a Krasinkiewicz
map, there exists $x \in L$ such that $C(x,k) \subset L$. Note that
$C(x,k)=C(x,\hat{k})$ because $\hat{k}^{-1}(z)=k^{-1}(z)$ for each
$z \in {\rm cl}(B(y, \varepsilon_y))$. This completes the proof of
Claim 3.

Now, we can complete the proof of Theorem \ref{thm:local}. Let $f
\in C(X,M)$ and $\varepsilon>0$. Since $f(X)$ is compact, there
exist finitely many points $y_1,y_2,...,y_N \in f(X)$ such that
$f(X) \subset \bigcup_{i=1}^{N}B(y_i, 2^{-1}\varepsilon_{y_i})$. Let
$\delta_0={\rm min}\{\varepsilon, 2^{-1}\varepsilon_{y_1},
2^{-1}\varepsilon_{y_2},...,2^{-1}\varepsilon_{y_N} \}$. By previous
claims, $\bigcap_{i=1}^{N}H_{y_i}$ is a dense $G_\delta$-subset of
$C(X,M)$. So, we can find a map $g_0 \in \bigcap_{i=1}^{N}H_{y_i}$
$\delta_0$-close to $f$. It suffices to show that $g_0$ is a
Krasinkiewicz map. To this end, let $T$ be a subcontinuum of $X$
with diam$g_0(T)>0$. Note that $g_0(T) \subset
\bigcup_{i=1}^{N}B(y_i, \varepsilon_{y_i})$. Hence, there exists a
subcontinuum $T'\subset T$ and $j\in \{1,2,...,N\}$ such that
diam$g_0(T')>0$ and $g_0(T') \subset {\rm cl}(B(y_{j},
\varepsilon_{y_{j}}))$. Since $g_0 \in H_{y_{j}}$, there exists a
point $x_0 \in T'$ such that $C(x_0,g_0) \subset T' \subset T$. This
completes the proof.

Our final proposition provides spaces which are not Krasinkiewicz.
It implies, for example, that hereditarily indecomposable continua
can not be Krasinkiewicz spaces.

\begin{pro}
Let $Y$ be a non-degenerate continuum such that some open subset of
$Y$ contains no arc. Then the projection $p:Y \times\I \to Y$ can
not be approximated by Krasinkiewicz maps.
\end{pro}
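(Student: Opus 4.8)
The plan is to prove that $p$ is \emph{bounded away} from the Krasinkiewicz maps: I will exhibit one $\varepsilon>0$ for which no Krasinkiewicz map $g\colon Y\times\I\to Y$ satisfies $\varrho(g,p)<\varepsilon$. Since $Y\times\I$ is compact, the source limitation topology on $C(Y\times\I,Y)$ coincides with the uniform one, so producing such an $\varepsilon$ shows that $p$ lies outside the closure of the set of Krasinkiewicz maps. Fix a metric $d$ on $Y$. First I would select the test object on which the no-arc hypothesis will bite: since $U$ is a nonempty open subset of the nondegenerate continuum $Y$, it contains a nondegenerate subcontinuum $K$ (by boundary bumping; or simply $K=Y$ when $U=Y$). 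Put $\eta=\diam K>0$ and fix $\varepsilon>0$ with $\varepsilon<\eta/2$ and $\{y\in Y:d(y,K)<\varepsilon\}\subset U$; the latter is possible because $K$ is compact and contained in the open set $U$.

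Now suppose, toward a contradiction, that some Krasinkiewicz map $g$ satisfies $\varrho(g,p)<\varepsilon$. The decisive step is a \emph{vertical collapse}. For each $y\in K$ the path $t\mapsto g(y,t)$ has image inside $B(y,\varepsilon)\subset U$, because $d\big(g(y,t),y\big)=d\big(g(y,t),p(y,t)\big)<\varepsilon$. This image is a continuous image of $\I$, hence a Peano continuum; but every nondegenerate Peano continuum contains an arc, which is impossible inside $U$. Therefore $g(\{y\}\times\I)$ is a single point, i.e.\ $g(y,\cdot)$ is constant. Writing $h(y)=g(y,0)$, I obtain a continuous map $h\colon K\to Y$ that is $\varepsilon$-close to the identity on $K$.

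Next I would verify that $h$ does not collapse $K$. If $h$ were constant on $K$, say with value $z_0$, then $d(y,z_0)=d(h(y),y)<\varepsilon$ for every $y\in K$, whence $\diam K\le 2\varepsilon<\eta$, a contradiction. Hence $h(K)$ is nondegenerate. Finally I would exhibit the continuum violating the Krasinkiewicz property: take $L=K\times\{0\}$, a subcontinuum of $Y\times\I$. Its image $g(L)=h(K)$ is nondegenerate, so $L$ is not contained in any fiber of $g$. On the other hand, for every $x=(y,0)\in L$ the vertical arc $\{y\}\times\I$ lies in the fiber $g^{-1}(h(y))$ and is connected through $x$, so $\{y\}\times\I\subset C(x,g)$; since $(y,1)\in C(x,g)\setminus L$, we get $C(x,g)\not\subset L$. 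Thus $L$ contains no component of a fiber, so $g$ is not Krasinkiewicz, the desired contradiction.

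I would flag the vertical collapse as the main obstacle: the whole argument hinges on converting \textit{``the image of each vertical arc avoids arcs''} into \textit{``that image is a point''} via the Hahn--Mazurkiewicz theorem, and this forces one to arrange \emph{in advance} that the $\varepsilon$-neighborhood of $K$ stays inside $U$. The only other delicate point is securing a nondegenerate subcontinuum $K\subset U$ of diameter exceeding $2\varepsilon$, which is exactly what prevents $h$ from collapsing $K$; this is resolved by choosing $K$ first and then $\varepsilon<\eta/2$.
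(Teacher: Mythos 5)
Your proof is correct and follows essentially the same route as the paper's: both choose a nondegenerate continuum $K$ (the paper's $L$) inside $U$, take $\varepsilon$ at most half its diameter and at most its distance to $Y\setminus U$, and derive the contradiction from the fact that a nondegenerate continuous image of a vertical arc would be a Peano continuum containing an arc inside $U$. The only difference is the order of the argument --- the paper applies the Krasinkiewicz property to a horizontal slice first and then finds a vertical arc with nondegenerate image, whereas you collapse all vertical arcs first and then exhibit $K\times\{0\}$ as a continuum that is in no fiber yet contains no fiber component --- a harmless rearrangement.
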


\begin{proof} Let $U$ be an open subset of $Y$ such that $U$ contains
no arc. Choose a non-degenerate continuum $L \subset U$ and let
$\delta=$ diam$L$ and $\varepsilon=\min\{\delta/2, dist(L,X\setminus
U)\}$. We claim that every map $q: Y \times\I \to Y$ which is
$\varepsilon$-close to $p$ can not be Krasinkiewicz. Indeed, suppose
there exists such a Krasinkiewicz map $q_0$ and let $t \in\I$. Then
$q_0(L \times \{t\})$ is not a singleton, so there exists $y \in
q_0(L \times \{t\})$ and a component $C$ of $q_0^{-1}(y)$ such that
$C \subset L \times \{t\}$. Take any point $z \in p(C)$. Then
$q_0(\{z\} \times\I)$ is not a singleton. So $q_0(\{z\} \times\I)$
contains an arc. On the other hand, $q_0(\{z\} \times\I) \subset U$.
This is a contradiction.
\end{proof}


\end{document}